\newtheorem{theorem}{Theorem}
\newtheorem{corollary}{Corollary}
\newtheorem{lemma}{Lemma}
\newtheorem{proposition}{Proposition}
\theoremstyle{remark}
\theoremstyle{remark}
\theoremstyle{remark}
\newtheorem{remark}{Remark}
\newcommand{\di}{\partial}
\newcommand{\rom}[1]{{\rm #1}}
\newcommand{\R}{\mathbb{R}}
\newcommand{\N}{\mathbb{N}}
\newcommand{\eps}{\varepsilon}
\newcommand{\la}{\langle}
\newcommand{\ra}{\rangle}
\newcommand{\FC}{{\cal F}C_{\mathrm b}(C_0(\mathbb R^d),\Gamma)}
\newcommand{\FCC}{{\cal F}C^\infty_{\mathrm b}(C^\infty_0(\mathbb R^d),\Gamma)}
\begin{document}

\makeatletter\@addtoreset{equation}{section}

\begin{center}{\Large \bf
  Binary jumps in continuum. I. Equilibrium processes and their scaling limits
}\end{center}

{\large Dmitri  L. Finkelshtein}\\
Institute of Mathematics, National Academy of Sciences of Ukraine, 3 Tereshchenkivska Str.,
Kyiv 01601, Ukraine.\\
e.mail: \texttt{fdl@imath.kiev.ua}\vspace{2mm}

{\large Yuri G. Kondratiev}\\
 Fakult\"at f\"ur Mathematik, Universit\"at
Bielefeld, Postfach 10 01 31, D-33501 Bielefeld, Germany;  NPU, Kyiv, Ukraine\\
 e-mail:
\texttt{kondrat@mathematik.uni-bielefeld.de}\vspace{2mm}

{\large Oleksandr V.  Kutoviy}\\
 Fakult\"at f\"ur Mathematik, Universit\"at Bielefeld, Postfach 10
01 31, D-33501 Bielefeld, Germany; National Dragomanov University,
Pirogova Str. 9,
Kyiv 01601,
Ukraine
e.mail: \texttt{kutoviy@mathematik.uni-bielefeld.de}\vspace{2mm}

{\large Eugene Lytvynov}\\
Department of Mathematics,  Swansea University, Singleton
Park, Swansea SA2 8PP, U.K.\\ e-mail:
\texttt{e.lytvynov@swansea.ac.uk}

{\small
\begin{center}
{\bf Abstract}
\end{center}
\noindent Let $\Gamma$ denote the space
of all locally finite subsets (configurations) in $\R^d$. A stochastic dynamics of binary jumps in continuum is a Markov process on $\Gamma$ in which
pairs of particles simultaneously hop over $\R^d$. In this paper, we study an equilibrium dynamics of binary jumps for which a Poisson measure is a symmetrizing (and hence invariant) measure.  The existence and uniqueness of the corresponding stochastic  dynamics are shown. We next prove the main result of this paper: a big class of dynamics of binary jumps converge, in a diffusive scaling limit, to a dynamics of interacting Brownian particles. We also study another scaling limit, which leads us to a spatial birth-and-death process in continuum. A remarkable property of the limiting dynamics is that  its generator possesses a spectral gap, a property which is hopeless to expect from the initial dynamics of binary jumps.

 }
\vspace{3mm}

\noindent 
{\it MSC:} 60F99, 60J60, 60J75, 60K35
\vspace{1.5mm}

\noindent{\it Keywords:} Continuous
system; Binary jumps;  Scaling limit; Poisson measure\vspace{1.5mm}


\section{Introduction} Let $\Gamma=\Gamma_{\R^d}$ denote the space
of all locally finite subsets (configurations) in $\R^d$, $d\in\mathbb N$.  A stochastic dynamics of binary jumps in continuum is a Markov process on $\Gamma$ in which
pairs of particles simultaneously hop over $\R^d$, i.e., at each jump time two points of the configuration change their position.
Thus, an (informal) generator of such a process has the form
\begin{multline} (LF)(\gamma)=\sum_{\{x_1,\,x_2\}\subset\gamma}\int_{(\R^d)^2}Q( x_1,x_2,dh_1\times dh_2)\\
\times
\big(F(\gamma\setminus\{x_1,x_2\}\cup\{x_1+h_1,x_2+h_2\})-F(\gamma)\big).\label{tyrsar5a}\end{multline}
Here, the measure  $Q( x_1,x_2,dh_1\times dh_2)$
describes the rate at which two particles, $x_1$ and $x_2$, of configuration $\gamma$ simultaneously hop to $x_1+h_1$ and $x_2+h_2$, respectively. Generally speaking, this rate may also depend on the rest of the configuration,\linebreak $\gamma\setminus\{x_1,x_2\}$. However, in our current studies, we will
restrict out attention to the case  of the generator  \eqref{tyrsar5a} only. As the reader will see below, already such  dynamics lead, in a  scaling limit, to interesting diffusion dynamics.

The stochastic dynamics of binary jumps may be compared with the Kawasaki dynamics in continuum. The latter is a Markov process on $\Gamma$ in which particles hop over $\R^d$ so that, at each jump time, only one particle changes its position. For a study of equilibrium  Kawasaki dynamics in continuum, we refer the reader to the papers \cite{FKL,FKO,G2, KKL,KLR, LO} and the references therein.

In this paper, we will study an equilibrium dynamics of binary jumps for which a Poisson measure is a symmetrizing (and hence invariant) measure.
In several cases, an equilibrium stochastic dynamics on $\Gamma$  with a Poisson symmetrizing measure  is a free dynamics, i.e., there is no interaction between particles. For example,  this is  true for a Surgailis process (in particular, the Glauber dynamics without interaction)
 \cite{Surgailis,Surgailis2} (see also \cite{KLR_free}). Note that a Surgailis  generator, in the symmetric Fock space realization of the $L^2$-space of   Poisson measure, is the second quantization of the generator of a one-particle dynamics. Another example of a Surgalis dynamics   is the free Kawasaki dynamics. There a  Poisson measure is a symmetrizing one, and in the course of   random evolution   each particle of the configuration randomly hops over $\mathbb R^d$ without any interaction with other particles.

Let us stress at this point one essential difference between lattice and continuous systems. An important example of a Markov dynamics
on lattice configurations is the so-called exclusion process. In this process particles jump over the lattice with only restriction to have no more
than one particle at each  point of the lattice. This process has a Bernoulli measure as an invariant  (and even   symmetrizing) measure
but the corresponding stochastic dynamics has non-trivial properties and possess  interesting and reach scaling limit behaviors.  A straightforward
generalization of the exclusion process  to the continuum  gives just free Kawasaki dynamics because the exclusion restriction
(and  an interaction between particles) will
obviously disappear for configurations in continuum.
To introduce (in certain sense simplest) interaction we consider the generator   above.
The dynamics
of binary jumps is not anymore a free particle process. In fact, in the  mentioned Fock space realization, the generator of this dynamics has a Jacobi matrix (three-diagonal) form.

The reader may wish to compare our results to the paper \cite{AGY}, which contains a discussion on random fields with interaction which are induced by Poisson random fields. 

When this paper was nearing completion, the reference    \cite{BMT}    came to our attention. There, at a rather heuristic level, the authors discuss a
special kind of a stochastic dynamics of binary jumps, and derive a
Boltzmann-type equation through a Vlasov-type scaling limit of this dynamics. In particular, the underlying space  is $\mathbb R^3$, and points in $\mathbb R^3$ are treated as velocities of particles, rather than their positions.  As a result of pair interaction, at  random times,
two particles change their velocities from  $ v_i$ and $ v_j$ to
 $ v'_i= v_i+ h$ and $ v_j'= v_j- h$, respectively. Thus,  $ v_i+ v_j= v'_i+ v'_j$ and hence the law of conservation of momentum is satisfied for this system. Hence, for such a dynamics, the measure $Q(x_1,x_2,dh_1\times dh_2)$ in formula \eqref{tyrsar5a} is concentrated on the set
 $$\{(h,-h)\mid h\in\mathbb R^3\}\subset (\R^3)^2. $$
In fact, further assumptions on  $Q$ appearing in \cite{BMT} are almost identical to ours in this special case. Throughout the paper, we have added a series of statements and remarks regarding such a dynamics.

 The paper is organized as follows.
 In Section \ref{lkgfty}, using the theory of Dirichlet forms \cite{Fu80,MR}, we
construct a rather general dynamics on the configuration space, whose generator has the form
 \eqref{tyrsar5a} on a set of test cylinder functions on $\Gamma$.

 In Section \ref{gufr75eses}, we show that the generator \eqref{tyrsar5a} with domain being the set of test cylinder functions uniquely identifies a Markov process on $\Gamma$. More exactly, this generator is essentially self-adjoint in the $L^2$-space of  Poisson measure. The proof of the essential self-adjointness 
 is done through an explicit formula for the form of the generator \eqref{tyrsar5a} realized as an operator acting in the symmetric Fock space (compare with e.g.\ \cite{AFY} and the references therein). The reader may find this formula for the generator in the Fock space to be of independent interest.

The central result of the paper is in Section \ref{vcydryds}, where we show that a big class of dynamics of binary jumps converge, in a diffusive scaling limit, to a dynamics of interacting Brownian particles.  The form of the  generator of the limiting diffusion resembles the generator
  of the gradient stochastic dynamics (e.g.\  \cite{AKR,Osada,Spohn,Yoshida}), while staying symmetric with respect to the Poisson measure, rather than with respect to a Gibbs measure (as it is the case for the gradient stochastic dynamics). We prove the convergence of processes at the level of convergence, in the $L^2$-norm, of their generators applied to a test cylinder function on $\Gamma$.

 Finally, in Section \ref{ufdey76ersr}, we study another scaling limit of a class of dynamics of
 binary jumps which leads to a spatial birth-and-death process in continuum, in which pairs of particles, as well as single particles randomly appear (are born) and disappear (die). We prove the convergence of processes at the level of weak convergence of their finite-dimensional distributions.  A remarkable property of the limiting dynamics is that its generator possesses  a spectral gap, a property which is hopeless to expect from the generator of the initial dynamics of binary jumps. We also note that the result of the scaling essentially depends on the initial distribution of the dynamics.

In the second part of this paper  \cite{FKKL} we discuss non-equilibrium dynamics of binary jumps. In particular, we show that
a Vlasov-type mesoscopic scaling for such a dynamics leads to
a generalized Boltzmann non-linear equation for the particle density.

 \section{Existence of dynamics}\label{lkgfty}

The configuration space over $\R^d$, $d\in\N$,
is defined as the set of all subsets of $\R^d$ which are locally
finite: $$\Gamma:=\big\{\,\gamma\subset \R^d\mid
|\gamma_\Lambda|<\infty\text{ for each  compact }\Lambda\subset\R^d\,\big\}.$$
Here $|\cdot|$ denotes the cardinality of a set and
$\gamma_\Lambda:= \gamma\cap\Lambda$. One can identify any
$\gamma\in\Gamma$ with the positive Radon measure
$\sum_{x\in\gamma}\delta_x\in{\cal M}(\R^d)$, where  $\delta_x$ is
the Dirac measure with mass at $x$, and  ${\cal M}(\R^d)$
 stands for the set of all
positive  Radon  measures on the Borel $\sigma$-algebra ${\cal
B}(\R^d)$. The space $\Gamma$ can be endowed with the relative
topology as a subset of the space ${\cal M}(\R^d)$ with the vague
topology, i.e., the weakest topology on $\Gamma$ with respect to
which  all maps $$\Gamma\ni\gamma\mapsto\la f,\gamma\ra:=\int_{\R^d}
f(x)\,\gamma(dx) =\sum_{x\in\gamma}f(x),\quad f\in C_0(\R^d),$$ are
continuous. Here, $C_0(\R^d)$ is the space of all
continuous  functions on $\R^d$ with
compact support. We will denote by ${\cal B}(\Gamma)$ the Borel
$\sigma$-algebra on $\Gamma$.

We introduce the set $\FC$
of all functions on $\Gamma$ of the form \begin{equation}\label{drtses} F(\gamma)=g(\la\varphi_1,\gamma\ra,\dots , \la\varphi_N,\gamma\ra), \end{equation} where $N\in\N$, $\varphi_1,\dots,\varphi_N\in C_0(\mathbb R^d)$ and $g\in C_{\mathrm b}(\R^N)$,
where $C_{\mathrm b}(\R^N)$ denotes the space of all continuous bounded functions on $\R^N$.

For any $x_1,x_2\in\R^d$, $x_1\ne x_2$, let $Q(x_1,x_2,dh_1\times dh_2)$ be a measure on $\big((\R^d)^2,\mathcal B ((\R^d)^2)\big)$. Some assumptions on $Q$ will be discussed below.
We are interested in a (formal) pre-generator  of a Markov processes on $\Gamma$ which has the form
\eqref{tyrsar5a} on the set $\FC$.
 We assume that, for any fixed $A,B\in\mathcal B(\R^d)$, $$(x_1,x_2)\mapsto Q(x_1,x_2,A\times B)$$ is a measurable function.
In order that the integration in \eqref{tyrsar5a} do not depend on the order of $x_1,x_2$, we also assume that
\begin{equation}\label{dtsrea}Q(x_1,x_2,A\times B)=Q(x_2,x_1,B\times A),\quad x_1,x_2\in\R^d,\ x_1\ne x_2,\ A,B\in \mathcal B(\R^d).\end{equation}

We would like $(L,\FC)$ to be a symmetric operator in the (real) $L^2$-space $L^2(\Gamma,\pi_z)$. Here $\pi_z$ denotes the Poisson measure on $(\Gamma,\mathcal B(\Gamma))$ with intensity measure $z\,dx$, $z>0$. We recall that $\pi_z$ is uniquely characterized by the Mecke identity: for any measurable function $G:\Gamma\times\R^d\to[0,\infty]$,
\begin{equation}\label{Mecke}\int_\Gamma\pi_z(d\gamma)\sum_{x\in\gamma} G(\gamma,x)=\int_\Gamma\pi_z(d\gamma)\int_{\R^d}z\,dx\, G(\gamma\cup\{x\},x).\end{equation}

Assume that, for each  $\Lambda\in\mathcal B_0(\R^d)$ (a bounded Borel subset of $\R^d$),
\begin{equation}\label{hdtsrr}
\int_{\R^d}dx_1\int_{\R^d}dx_2 \int_{(\R^d)^2}Q(x_1,x_2,dh_1\times dh_2)(\mathbf 1_\Lambda(x_1)+\mathbf 1_\Lambda(x_1+h_1))<\infty.\end{equation}
Here,  $\mathbf 1_\Lambda$ denotes the indicator function of $\Lambda$. Using \eqref{tyrsar5a} and \eqref{dtsrea}--\eqref{hdtsrr},
one easily concludes that the quadratic form
$$\mathcal E(F,G):=\int_\Gamma (-LF)(\gamma)G(\gamma)\,\pi_z(d\gamma),\quad F,G\in\FC,$$
is well defined.

In order to achieve the symmetry of $\mathcal E$, we will assume that there exists a measure $m$ on $\big((\R^d)^2,\mathcal B ((\R^d)^2)\big)$ such that
\begin{equation}\label{xsreasr}Q(x_1,x_2,dh_1\times dh_2)=m(dh_1\times dh_2)\,q(x_1,x_2,h_1,h_2),\end{equation}
where $q:(\R^d)^4\to[0,\infty]$ is a measurable function.

\begin{lemma}\label{vtrsdty}
Assume that, for any $x_1,x_2\in\mathbb R^d$, $x_1\ne x_2$, we have the equality of measures
\begin{equation}\label{cfsres}
m(dh_1\times dh_2)q(x_1,x_2,h_1,h_2)=m'(dh_1\times dh_2)q(x_1+h_1,x_2+h_2,-h_1,-h_2),
\end{equation}
where $m'$ denotes the pushforward of the measure  $m$ under the mapping $(h_1,h_2)\mapsto(-h_1,-h_2)$.
Then, for any $F,G\in\FC$,
\begin{multline}\mathcal E (F,G)=\frac12\int_\Gamma\pi_z(d\gamma)\sum_{\{x_1,\,x_2\}\subset\gamma}
\int_{(\mathbb R^d)^2}m(dh_1\times dh_2)q(x_1,x_2,h_1,h_2)\\ \times
\big(F(\gamma\setminus\{x_1,x_2\}\cup\{
x_1+h_1,x_2+h_2\})-F(\gamma)\big) \big(G(\gamma\setminus\{x_1,x_2\}\cup\{
x_1+h_1,x_2+h_2\})-G(\gamma)\big)\label{hcfthg}.\end{multline}
In particular, the quadratic form $(\mathcal E,\FC)$ is symmetric.
\end{lemma}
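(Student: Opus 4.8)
The plan is to rewrite both $\mathcal E(F,G)$ and the right-hand side of \eqref{hcfthg} in terms of a single integral functional on $\Gamma$, and then to exhibit the required symmetry by a change of variables matched to \eqref{cfsres}. For $\{x_1,x_2\}\subset\gamma$ and $(h_1,h_2)\in(\R^d)^2$ write
$$\gamma':=\gamma\setminus\{x_1,x_2\}\cup\{x_1+h_1,x_2+h_2\},$$
and, for a measurable $\Phi$ on $\Gamma\times(\R^d)^2\times(\R^d)^2$ that is invariant under the exchange $(x_1,h_1)\leftrightarrow(x_2,h_2)$, set
$$I(\Phi):=\int_\Gamma\pi_z(d\gamma)\sum_{\{x_1,x_2\}\subset\gamma}\int_{(\R^d)^2}m(dh_1\times dh_2)\,q(x_1,x_2,h_1,h_2)\,\Phi(\gamma,x_1,x_2,h_1,h_2)$$
whenever the integral converges absolutely. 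By \eqref{tyrsar5a} and \eqref{xsreasr} one has $\mathcal E(F,G)=-I\big((F(\gamma')-F(\gamma))G(\gamma)\big)$, while the right-hand side of \eqref{hcfthg} equals $\tfrac12 I\big((F(\gamma')-F(\gamma))(G(\gamma')-G(\gamma))\big)$. Since $F,G\in\FC$ are bounded and built from test functions supported in a common $\Lambda\in\mathcal B_0(\R^d)$, the increment $F(\gamma')-F(\gamma)$ vanishes unless at least one of $x_1,x_2,x_1+h_1,x_2+h_2$ lies in $\Lambda$; combining this with \eqref{hdtsrr}, \eqref{dtsrea} and the fact that $\pi_z$ is a probability measure shows that $I\big(|(F(\gamma')-F(\gamma))G(\gamma)|\big)$ and $I\big(|(F(\gamma')-F(\gamma))G(\gamma')|\big)$ are finite, so all rearrangements below are legitimate. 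The whole statement then reduces to the single identity
$$I\big((F(\gamma')-F(\gamma))G(\gamma)\big)=-\,I\big((F(\gamma')-F(\gamma))G(\gamma')\big).\qquad(\ast)$$
Indeed, granting $(\ast)$ we obtain $\mathcal E(F,G)=-I\big((F(\gamma')-F(\gamma))G(\gamma)\big)=I\big((F(\gamma')-F(\gamma))G(\gamma')\big)$, and the half-sum of these two expressions is exactly $\tfrac12 I\big((F(\gamma')-F(\gamma))(G(\gamma')-G(\gamma))\big)$, i.e.\ \eqref{hcfthg}; the symmetry $\mathcal E(F,G)=\mathcal E(G,F)$ is then immediate since the right-hand side of \eqref{hcfthg} is symmetric in $F$ and $G$.

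To establish $(\ast)$, I would first apply the Mecke identity \eqref{Mecke} twice, turning $\int_\Gamma\pi_z(d\gamma)\sum_{\{x_1,x_2\}\subset\gamma}(\cdots)$ into $\tfrac{z^2}{2}\int_\Gamma\pi_z(d\gamma)\int_{(\R^d)^2}dx_1\,dx_2\,(\cdots)$ with $\gamma$ replaced throughout by $\gamma\cup\{x_1,x_2\}$ (the diagonal $\{x_1=x_2\}$ being Lebesgue-null); note that then $\gamma'$ becomes $\gamma\cup\{x_1+h_1,x_2+h_2\}$, since $\pi_z$-a.s.\ $x_1,x_2\notin\gamma$. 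On the resulting integral I would perform the change of variables $(x_1,x_2,h_1,h_2)\mapsto(x_1+h_1,x_2+h_2,-h_1,-h_2)$: it leaves $dx_1\,dx_2$ invariant and transforms $m$ into $m'$, so that by the detailed-balance relation \eqref{cfsres} the factor $m(dh_1\times dh_2)\,q(x_1,x_2,h_1,h_2)$ reappears unchanged, while the two configurations $\gamma\cup\{x_1,x_2\}$ and $\gamma\cup\{x_1+h_1,x_2+h_2\}$ swap roles. Applying the Mecke identity backwards twice — re-absorbing the pair $\{x_1,x_2\}$ into $\gamma$ — then returns a sum over $\{x_1,x_2\}\subset\gamma$ and yields precisely $I\big((F(\gamma)-F(\gamma'))G(\gamma')\big)=-I\big((F(\gamma')-F(\gamma))G(\gamma')\big)$, which is $(\ast)$.

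I expect the main obstacle to be bookkeeping rather than anything substantial: one must verify that at each stage the integrand of $I(\cdot)$ is symmetric under $(x_1,h_1)\leftrightarrow(x_2,h_2)$ — which holds by \eqref{dtsrea} and because the sets $\{x_1,x_2\}$ and $\{x_1+h_1,x_2+h_2\}$ are unordered — so that the ``unordered pairs $\leftrightarrow\tfrac12\iint$'' form of the Mecke identity may be used, and that every application of \eqref{Mecke}, of Fubini's theorem and of the change of variables acts on an absolutely integrable function, which is guaranteed by the domination noted in the first paragraph. The only conceptual point is the exact pairing of the change of variables $(x_i,h_i)\mapsto(x_i+h_i,-h_i)$ with \eqref{cfsres}; once this is set up correctly, there is no real difficulty.
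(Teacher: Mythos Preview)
Your proposal is correct and follows essentially the same route as the paper's proof: both reduce \eqref{hcfthg} to the identity $I\big((F(\gamma')-F(\gamma))G(\gamma')\big)=-\,I\big((F(\gamma')-F(\gamma))G(\gamma)\big)$ and establish it by applying the Mecke identity twice, performing the change of variables $(x_i,h_i)\mapsto(x_i+h_i,-h_i)$, invoking \eqref{cfsres}, and then reversing the Mecke identity. Your write-up is in fact a bit more careful than the paper's about the integrability justification via \eqref{hdtsrr} and about the symmetry under $(x_1,h_1)\leftrightarrow(x_2,h_2)$ needed for the unordered-pairs form of the Mecke identity.
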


\begin{proof}
Using the Mecke identity \eqref{Mecke} and formula \eqref{cfsres}, we have
\begin{align*}
&\int_\Gamma\pi_z(d\gamma)\sum_{\{x_1,\,x_2\}\subset\gamma}
\int_{(\mathbb R^d)^2}m(dh_1\times dh_2)q(x_1,x_2,h_1,h_2)\\
&\qquad\times \big(F(\gamma\setminus\{x_1,x_2\}\cup\{
x_1+h_1,x_2+h_2\})-F(\gamma)\big)G(\gamma\setminus\{x_1,x_2\}\cup\{
x_1+h_1,x_2+h_2\})\\
&\quad =\frac12\int_\Gamma\pi_z(d\gamma)\int_{\R^d}z\, dx_1\int_{\R^d} z\,dx_2
\int_{(\mathbb R^d)^2}m(dh_1\times dh_2)q(x_1,x_2,h_1,h_2)\\
&\qquad\times \big( F(\gamma\cup\{x_1+h_1,x_2+h_2\})-F(\gamma\cup\{x_1,x_2\})\big)
G(\gamma\cup\{x_1+h_1,x_2+h_2\})\\
&\quad =\frac12\int_\Gamma\pi_z(d\gamma)\int_{(\mathbb R^d)^2}m(dh_1\times dh_2)\int_{\R^d}z\, dx_1\int_{\R^d} z\,dx_2\,
q(x_1-h_1,x_2-h_2,h_1,h_2)\\
&\qquad\times \big( F(\gamma\cup\{x_1,x_2\})-F(\gamma\cup\{x_1-h_1,x_2-h_2\})\big)
G(\gamma\cup\{x_1,x_2\})\\
&\quad =\frac12\int_\Gamma\pi_z(d\gamma)\int_{\R^d}z\, dx_1\int_{\R^d} z\,dx_2
\int_{(\mathbb R^d)^2}m'(dh_1\times dh_2)q(x_1+h_1,x_2+h_2,-h_1,-h_2)\\
&\qquad\times \big( F(\gamma\cup\{x_1,x_2\})-F(\gamma\cup\{x_1+h_1,x_2+h_2\})\big)
G(\gamma\cup\{x_1,x_2\})\\
&\quad =\frac12\int_\Gamma\pi_z(d\gamma)\int_{\R^d}z\, dx_1\int_{\R^d} z\,dx_2
\int_{(\mathbb R^d)^2}m(dh_1\times dh_2)q(x_1,x_2,h_1,h_2)\\
&\qquad\times \big( F(\gamma\cup\{x_1,x_2\})-F(\gamma\cup\{x_1+h_1,x_2+h_2\})\big)
G(\gamma\cup\{x_1,x_2\})\\
&\quad=-\int_\Gamma\pi_z(d\gamma)\sum_{\{x_1,\,x_2\}\subset\gamma}
\int_{(\mathbb R^d)^2}m(dh_1\times dh_2)q(x_1,x_2,h_1,h_2)\\
&\qquad\times \big(F(\gamma\setminus\{x_1,x_2\}\cup\{
x_1+h_1,x_2+h_2\})-F(\gamma)\big)G(\gamma).
\end{align*}
From here \eqref{hcfthg} follows.
\end{proof}

As easily seen, $(\mathcal E,\FC)$ is a pre-Dirichlet form, i.e., if this form is closable in $L^2(\Gamma,\pi_z)$, then it is a Dirichlet form, see e.g.\ \cite{Fu80,MR} for details on Dirichlet forms.

\begin{lemma}\label{fdrytrs}
Assume that the following two conditions are satisfied:

\rom{(C1)} For each  $\Lambda\in\mathcal B_0(\R^d)$
$$ \int_{(\R^d)^2}Q(x_1,x_2,dh_1\times dh_2)(\mathbf 1_\Lambda(x_1)+\mathbf 1_\Lambda(x_1+h_1))\in L^1((\R^d)^2,dx_1\,dx_2)\cap L^2((\R^d)^2,dx_1\,dx_2). $$

\rom{(C2)} We have
$$\sup_{x_1\in\mathbb R^d}\int_{\R^d} dx_2\int_{(\R^d)^2}Q(x_1,x_2,dh_1\times dh_2)<\infty.$$
Then, for each $F\in \FC$, $LF\in L^2(\Gamma,\pi_z)$, and so $(-L,\FC)$ is the generator of the quadratic form $(\mathcal E,\FC)$ on $L^2(\Gamma,\pi_z)$.
\end{lemma}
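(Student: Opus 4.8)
The plan is to prove the only substantive point, that $LF\in L^2(\Gamma,\pi_z)$ for every $F\in\FC$; the concluding assertion of the lemma then follows at once, since for $F,G\in\FC$ one has $\mathcal E(F,G)=\int_\Gamma(-LF)(\gamma)\,G(\gamma)\,\pi_z(d\gamma)$ by the definition of $\mathcal E$ and $G$ is bounded, so $(-L,\FC)$ is exactly the operator attached to the form $(\mathcal E,\FC)$. \textbf{Step 1 (localization).} Write $F$ as in \eqref{drtses} and fix a compact set $\Lambda$ with $\bigcup_{j=1}^N\operatorname{supp}\varphi_j\subseteq\Lambda$. For $\{x_1,x_2\}\subset\gamma$ and $h_1,h_2\in\R^d$ put $\gamma':=\gamma\setminus\{x_1,x_2\}\cup\{x_1+h_1,x_2+h_2\}$. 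Since $\la\varphi_j,\gamma'\ra-\la\varphi_j,\gamma\ra=\varphi_j(x_1+h_1)+\varphi_j(x_2+h_2)-\varphi_j(x_1)-\varphi_j(x_2)$, we have $F(\gamma')=F(\gamma)$ unless at least one of the four points $x_1,x_2,x_1+h_1,x_2+h_2$ lies in $\Lambda$, and $|F(\gamma')-F(\gamma)|\le 2\|g\|_\infty$ in any case. Set $b(x_1,x_2):=\int_{(\R^d)^2}Q(x_1,x_2,dh_1\times dh_2)\big(\mathbf 1_\Lambda(x_1)+\mathbf 1_\Lambda(x_1+h_1)\big)$ and $\bar b(x_1,x_2):=b(x_1,x_2)+b(x_2,x_1)$. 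Bounding the indicator ``at least one point in $\Lambda$'' by $\mathbf 1_\Lambda(x_1)+\mathbf 1_\Lambda(x_1+h_1)+\mathbf 1_\Lambda(x_2)+\mathbf 1_\Lambda(x_2+h_2)$ and using \eqref{dtsrea} to rewrite $\int Q(x_1,x_2,dh_1\times dh_2)\big(\mathbf 1_\Lambda(x_2)+\mathbf 1_\Lambda(x_2+h_2)\big)$ as $b(x_2,x_1)$, we get, for every $\gamma\in\Gamma$,
$$\sum_{\{x_1,x_2\}\subset\gamma}\int_{(\R^d)^2}Q(x_1,x_2,dh_1\times dh_2)\,|F(\gamma')-F(\gamma)|\ \le\ 2\|g\|_\infty\,S(\gamma),\qquad S(\gamma):=\sum_{\{x_1,x_2\}\subset\gamma}\bar b(x_1,x_2).$$

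\textbf{Step 2 (second moment of $S$).} It remains to show $S\in L^2(\Gamma,\pi_z)$; since then $S\in L^1(\Gamma,\pi_z)$, the series \eqref{tyrsar5a} defining $LF$ is absolutely convergent for $\pi_z$-a.e.\ $\gamma$, and $|LF|\le 2\|g\|_\infty S$ a.e., yielding $LF\in L^2(\Gamma,\pi_z)$. Expanding $S^2$ and using the factorial-moment formula for the Poisson measure $\pi_z$ (obtained by applying the Mecke identity \eqref{Mecke} twice), $\int_\Gamma S^2\,d\pi_z$ splits into finitely many integrals according to the coincidence pattern of the two unordered pairs of integration points: a $(\R^d)^4$-term equal to $z^4\big(\int_{(\R^d)^2}\bar b\,dx_1dx_2\big)^2$; $(\R^d)^3$-terms (one shared point) bounded by a constant times $z^3\int_{\R^d}\bar\beta(x)^2\,dx$, where $\bar\beta(x):=\int_{\R^d}\bar b(x,y)\,dy$; and $(\R^d)^2$-terms bounded by a constant times $z^2\int_{(\R^d)^2}\bar b(x_1,x_2)^2\,dx_1dx_2$.

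\textbf{Step 3 (finiteness via (C1)--(C2)).} Condition (C1) states precisely that $b$, hence $\bar b$, belongs to $L^1\big((\R^d)^2,dx_1dx_2\big)\cap L^2\big((\R^d)^2,dx_1dx_2\big)$, which makes the $(\R^d)^4$- and $(\R^d)^2$-terms finite. For the $(\R^d)^3$-terms, note that $\bar\beta(x)\le 2\sup_{x_1}\int_{\R^d}dx_2\int_{(\R^d)^2}Q(x_1,x_2,dh_1\times dh_2)+2\sup_{x_2}\int_{\R^d}dx_1\int_{(\R^d)^2}Q(x_1,x_2,dh_1\times dh_2)$; the first supremum is finite by (C2), and the second equals the first after using the equality of total masses $\int Q(x_1,x_2,dh_1\times dh_2)=\int Q(x_2,x_1,dh_1\times dh_2)$ coming from \eqref{dtsrea}. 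Thus $\bar\beta\in L^\infty(\R^d)$, and since $\int_{\R^d}\bar\beta\,dx=\int_{(\R^d)^2}\bar b\,dx_1dx_2<\infty$ by (C1), also $\bar\beta\in L^2(\R^d)$; hence every $(\R^d)^3$-term is finite. Therefore $S\in L^2(\Gamma,\pi_z)$, and the lemma follows.

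The only mildly delicate point I expect is the bookkeeping in Step 2: correctly enumerating the coincidence patterns of the four integration points (each of the two pairs having distinct entries) and checking that each resulting integral is dominated by one of the three displayed types. Everything else is a direct consequence of the support property of $F$ isolated in Step 1 together with the integrability encoded in (C1)--(C2).
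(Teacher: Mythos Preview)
Your proof is correct and follows essentially the same route as the paper's. The paper's argument consists of exactly your two ingredients: the localization estimate (your Step~1 is the paper's inequality \eqref{dse5sw}) and the second-moment identity for sums over pairs (your Step~2 is the paper's formula \eqref{fdrssgdtd}), after which the paper simply says the conclusion ``follows from (C1), (C2), \eqref{dtsrea}, \eqref{fdrssgdtd}, and \eqref{dse5sw}''; your Step~3 merely spells out the verification of finiteness that the paper leaves to the reader.
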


\begin{proof}
Using the Mecke identity \eqref{Mecke}, we easily derive the following formula:
\begin{multline}
\int_\Gamma\pi_z(d\gamma)\left(\sum_{\{x_1,\,x_2\}\subset\gamma}f(x_1,x_2)\right)^2\\
=\frac14\left(\int_{\R^d}z\,dx_1\int_{\R^d}z\,dx_2\, f(x_1,x_2)\right)^2+\int_{\R^d}z\,dx_1\int_{\R^d}z\,dx_2\int_{\R^d}z\,dx_3\, f(x_1,x_2)f(x_2,x_3)\\
\text{}+\frac12\int_{\R^d}z\,dx_1\int_{\R^d}z\,dx_2\,f(x_1,x_2)^2
\label{fdrssgdtd}\end{multline}
for any measurable function $f:(\mathbb R^d)^2\to[0,\infty]$ satisfying $f(x_1,x_2)=f(x_2,x_1)$ for all $x_1,x_2\in\R^d$. For any $F\in\FC$, there exists a  $\Lambda\in\mathcal B_0(\R^d)$ such that
 \begin{multline}\big|F(\gamma\setminus\{x_1,x_2\}\cup\{x_1+h_1,x_2+h_2\})-F(\gamma)\big|\\
 \le C_1(\mathbf 1_\Lambda(x_1)+\mathbf 1_\Lambda(x_2)+\mathbf 1_\Lambda(x_1+h_1)+\mathbf 1_\Lambda(x_2+h_2)).\label{dse5sw}\end{multline}
 Here and below we denote by $C_i$, $i=1,2,3,\dots$, strictly positive constants whose explicit value is not important for us.
 Now the statement of the lemma follows from (C1), (C2), \eqref{dtsrea}, \eqref{fdrssgdtd}, and \eqref{dse5sw}.
\end{proof}

Completely analogously to the proof of Theorem~3.1 in \cite{KLR} (see also the proof of Theorem~3.1 in \cite{LO}),
we easily conclude the following theorem from  Lemmas \ref{vtrsdty} and~\ref{fdrytrs}.

\begin{theorem}\label{e6ue6} Assume that  conditions \eqref{dtsrea}, \eqref{cfsres}, \rom{(C1)}, and \rom{(C2)} are satisfied.
Then the quadratic  form $(\mathcal{E},
\FC)$ is closable in $L^{2}(\Gamma,
\pi_z)$ and its closure will be denoted by $(\mathcal{E},
D(\mathcal{E}))$.
Further there exists a conservative Hunt process
$$
M=\left(\Omega,
\mathcal{F},(\mathcal{F}_{t})_{t\geq
0},(\Theta_{t})_{t\geq 0}, (X(t))_{t\geq 0},
(P_{\gamma})_{\gamma\in \Gamma}\right)
$$
on $\Gamma$ which is properly associated with
$(\mathcal{E}, D(\mathcal{E}))$, i.e., for each
($\pi_z$-version of) $F\in L^{2}(\Gamma, \pi_z)$ and $t>0$ $$\Gamma\ni
\gamma\mapsto
(p_{t}F)(\gamma):=\int_{\Omega}F(X(t))\, dP_{\gamma}$$
is an $\mathcal{E}$-quasi continuous version of
$\exp(tL)F$. Here  $(-L, D(L))$ is the
generator of the quadratic form $(\mathcal{E}, D(\mathcal{E}))$---the Friedrichs extension of the operator $(-L,\FC)$.
$M$ is up-to $\pi_z$-equivalence unique. In particular,
$M$ is $\pi_z$-symmetric and has $\pi_z$ as invariant measure.
\end{theorem}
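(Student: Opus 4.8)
The plan is to deduce the theorem from the general theory of symmetric quasi-regular Dirichlet forms \cite{MR}, following the scheme of \cite[Theorem~3.1]{KLR} (cf.\ also \cite[Theorem~3.1]{LO}). First I would settle closability. By Lemma~\ref{fdrytrs} the operator $(-L,\FC)$ indeed maps $\FC$ into $L^2(\Gamma,\pi_z)$; it is densely defined (density of $\FC$ in $L^2(\Gamma,\pi_z)$ is a standard fact), it is symmetric by Lemma~\ref{vtrsdty}, since $\mathcal E(F,G)=(-LF,G)_{L^2(\Gamma,\pi_z)}$ and the right-hand side of \eqref{hcfthg} is symmetric in $F,G$, and it is non-negative because \eqref{hcfthg} gives $\mathcal E(F,F)\ge0$. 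A densely defined, symmetric, non-negative operator is closable, and the closure of the form $\mathcal E(\cdot,\cdot)$ on $\FC$ is precisely the form of the Friedrichs extension of $(-L,\FC)$; this produces $(\mathcal E,D(\mathcal E))$ and identifies $(-L,D(L))$ with that Friedrichs extension. That the closure is a Dirichlet form was already noted after Lemma~\ref{vtrsdty}: for a normal contraction $\phi$ one has $\phi\circ F\in\FC$ and $|\phi(F(\eta))-\phi(F(\gamma))|\le|F(\eta)-F(\gamma)|$ for every pair $\gamma,\eta=\gamma\setminus\{x_1,x_2\}\cup\{x_1+h_1,x_2+h_2\}$ occurring in the integrand of \eqref{hcfthg}, whence $\mathcal E(\phi\circ F,\phi\circ F)\le\mathcal E(F,F)$, and the Markovian property carries over to the closure.

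The hard part is quasi-regularity in the sense of \cite{MR}, which one cannot avoid because $\Gamma$ with the vague topology is Polish but not locally compact, so the classical Fukushima theory \cite{Fu80} is inapplicable. I would reproduce the argument of \cite[Theorem~3.1]{KLR}, which rests on the configuration-space constructions of \cite{AKR}. Concretely, one must: (i) exhibit an $\mathcal E$-nest of compact subsets of $\Gamma$, using that compactness in $\Gamma$ is controlled by uniform bounds on the maps $\gamma\mapsto\la\mathbf 1_\Lambda,\gamma\ra$, $\Lambda\in\mathcal B_0(\R^d)$, and that, thanks to conditions \rom{(C1)} and \rom{(C2)}, the form interacts well with the corresponding cut-off functions in $\FC$; (ii) verify that the countable family $\{\la\varphi,\cdot\ra:\varphi\in D\}$, with $D$ a countable dense subset of $C_0(\R^d)$, is contained in $D(\mathcal E)$, consists of $\mathcal E$-quasi-continuous functions, and separates the points of $\Gamma$; and (iii) check the residual countable-generation condition of \cite{MR}. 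I expect step (i) — the construction of the nest, which is the substitute for local compactness — to be the main technical obstacle, and this is exactly where conditions \rom{(C1)} and \rom{(C2)} enter.

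Once $(\mathcal E,D(\mathcal E))$ is known to be a quasi-regular symmetric Dirichlet form, the representation theorem of \cite{MR} yields a $\pi_z$-symmetric Hunt process $M$ on $\Gamma$ properly associated with it, with $(p_tF)(\gamma)=\int_\Omega F(X(t))\,dP_\gamma$ an $\mathcal E$-quasi-continuous version of $e^{tL}F$, and $M$ unique up to $\pi_z$-equivalence. Symmetry of $\mathcal E$ immediately gives $\int_\Gamma p_tF\,d\pi_z=\int_\Gamma F\,d\pi_z$, so $\pi_z$ is invariant. Finally, for conservativeness I would note that the constant function $1$ belongs to $\FC\subset D(\mathcal E)$ and, by \eqref{hcfthg}, $\mathcal E(1,1)=0$; hence $p_t1=1$, and by the standard conservativeness criterion for Dirichlet forms $M$ has infinite lifetime. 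This yields all assertions of the theorem.
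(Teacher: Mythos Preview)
Your proposal is correct and follows essentially the same approach as the paper, which simply states that the result follows ``completely analogously to the proof of Theorem~3.1 in \cite{KLR} (see also the proof of Theorem~3.1 in \cite{LO})'' from Lemmas~\ref{vtrsdty} and~\ref{fdrytrs}. You have expanded that citation into an accurate outline: closability via the Friedrichs extension of the non-negative symmetric operator $(-L,\FC)$, the pre-Dirichlet property from the difference representation \eqref{hcfthg}, quasi-regularity via the nest/separation machinery of \cite{MR,KLR}, and conservativeness from $1\in\FC$ with $\mathcal E(1,1)=0$.
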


\begin{remark} We refer to \cite{MR} for an explanation of notations appearing in Theorem~\ref{e6ue6}, see also a brief explanation of them in \cite{LO}.
\end{remark}

We will call a Markov process as  in Theorem~\ref{e6ue6} a  stochastic dynamics of binary jumps.  Let us now consider two classes of such  dynamics. \vspace{2mm}

 {\bf 1)}  Let us assume that the  measure $m(dh_1\times dh_2)$ in \eqref{xsreasr} is the Lebesgue measure $dh_1\, dh_2$, and let us assume that $ q(x_1,x_2,h_1,h_2)=q(x_2-x_1,h_1,h_2)$
for some measurable function $q:(\R^d)^3\to[0,\infty]$. (Here and below we are using an obvious abuse of notation.)
Thus,
\begin{equation}\label{tsres}
Q(x_1,x_2,dh_1\times dh_2)=dh_1\,dh_2\, q(x_2-x_1,h_1,h_2).
\end{equation}

\begin{proposition}\label{fddtr} Assume that \eqref{tsres} holds and
\begin{align}
q(-x,h_1,h_2)&=q(x,h_2,h_1),\label{dtrdss}\\
q(x,h_1,h_2)&=q(x+h_2-h_1,-h_1,-h_2)\label{dseraw}
\end{align}
for all $x,h_1,h_2\in\R^d$.
 Further assume that
\begin{gather}
q(x,h_1,h_2)\in L^1((\mathbb R^d)^3,dx\,dh_1\,dh_2),\label{vftsers}\\
\operatornamewithlimits{ess\,sup}_{x\in\R^d}\int_{\R^d}dh_1\int_{\R^d}dh_2\,q(x,h_1,h_2)<\infty.\label{fdt}
\end{gather}
Then $Q(x_1,x_2,dh_1\times dh_2)$ satisfies the conditions of Theorem \rom{\ref{e6ue6}}.
\end{proposition}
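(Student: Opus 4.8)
The plan is to verify, one by one, the four hypotheses of Theorem~\ref{e6ue6}: the symmetry condition \eqref{dtsrea} for $Q$, the detailed-balance condition \eqref{cfsres}, and the integrability conditions (C1) and (C2), using the special structure \eqref{tsres} together with the assumptions \eqref{dtrdss}--\eqref{fdt}.

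First I would dispose of \eqref{dtsrea}. With $Q$ given by \eqref{tsres}, the identity $Q(x_1,x_2,A\times B)=Q(x_2,x_1,B\times A)$ amounts, after the change of variables swapping the roles of the two $h$-integrations, to $q(x_2-x_1,h_1,h_2)=q(x_1-x_2,h_2,h_1)$, which is exactly \eqref{dtrdss} with $x=x_2-x_1$. Next, for \eqref{cfsres}, I take $m(dh_1\times dh_2)=dh_1\,dh_2$; since Lebesgue measure on $(\R^d)^2$ is invariant under $(h_1,h_2)\mapsto(-h_1,-h_2)$, we have $m'=m$, so \eqref{cfsres} reduces to the pointwise identity $q(x_1,x_2,h_1,h_2)=q(x_1+h_1,x_2+h_2,-h_1,-h_2)$ of the $q$ appearing in \eqref{xsreasr}. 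Written in terms of the reduced function via $q(x_1,x_2,h_1,h_2)=q(x_2-x_1,h_1,h_2)$, the right-hand side becomes $q\big((x_2+h_2)-(x_1+h_1),-h_1,-h_2\big)=q(x+h_2-h_1,-h_1,-h_2)$ with $x=x_2-x_1$, and this equals $q(x,h_1,h_2)$ precisely by \eqref{dseraw}.

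It remains to check (C1) and (C2). For (C2): using \eqref{tsres} and the translation invariance of Lebesgue measure, $\int_{\R^d}dx_2\int_{(\R^d)^2}Q(x_1,x_2,dh_1\times dh_2)=\int_{\R^d}dx_2\int_{\R^d}dh_1\int_{\R^d}dh_2\,q(x_2-x_1,h_1,h_2)$; substituting $x=x_2-x_1$ shows the supremum over $x_1$ equals $\int_{\R^d}dx\int_{\R^d}dh_1\int_{\R^d}dh_2\,q(x,h_1,h_2)$, which is finite by \eqref{vftsers}. (In fact \eqref{fdt} is more than enough for the pointwise bound; one only needs \eqref{vftsers} here, but either way (C2) holds.) For (C1), write $\Phi(x_1,x_2):=\int_{(\R^d)^2}Q(x_1,x_2,dh_1\times dh_2)\big(\mathbf 1_\Lambda(x_1)+\mathbf 1_\Lambda(x_1+h_1)\big)$ and split it as $\Phi=\Phi_1+\Phi_2$. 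The term $\Phi_1(x_1,x_2)=\mathbf 1_\Lambda(x_1)\int_{(\R^d)^2}dh_1\,dh_2\,q(x_2-x_1,h_1,h_2)$ has $L^1$-norm $|\Lambda|\cdot\|q\|_{L^1((\R^d)^3)}<\infty$ by \eqref{vftsers}, and it is in $L^2$ because it is bounded (by \eqref{fdt}) and supported, in the $x_1$-variable, on $\Lambda$ while in the $x_2$-variable it is an integrable function of $x_2-x_1$ — more carefully, $\|\Phi_1\|_{L^2}^2\le \big(\operatornamewithlimits{ess\,sup}_x\int q(x,h_1,h_2)\,dh_1dh_2\big)\cdot|\Lambda|\cdot\|q\|_{L^1}<\infty$. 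The term $\Phi_2(x_1,x_2)=\int_{(\R^d)^2}dh_1\,dh_2\,q(x_2-x_1,h_1,h_2)\mathbf 1_\Lambda(x_1+h_1)$; integrating in $x_1$ first and using Fubini with the substitution $y=x_1+h_1$ gives $\int_{(\R^d)^2}\Phi_2\,dx_1dx_2=|\Lambda|\cdot\|q\|_{L^1}<\infty$, and a similar computation with the extra $\operatorname{ess\,sup}$ bound from \eqref{fdt} controls the $L^2$-norm. I expect this estimate for $\Phi_2$ — keeping track of which variable is integrated against the indicator and applying Fubini and Young-type inequalities correctly — to be the only step requiring genuine care; everything else is a direct substitution matching the abstract hypotheses to \eqref{dtrdss}--\eqref{dseraw} and \eqref{vftsers}--\eqref{fdt}.
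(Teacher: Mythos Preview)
Your proposal is correct and follows essentially the same route as the paper: reduce \eqref{dtsrea} and \eqref{cfsres} to \eqref{dtrdss} and \eqref{dseraw}, get (C2) from \eqref{vftsers}, and verify (C1) by splitting $\Phi=\Phi_1+\Phi_2$ and bounding each piece in $L^1$ and $L^2$ via \eqref{vftsers} and \eqref{fdt}. One cosmetic difference: for the $L^1$ bound on $\Phi_2$ the paper first invokes \eqref{dseraw} before shifting variables, whereas your direct substitution $y=x_1+h_1$ followed by $x=x_2-y+h_1$ achieves the same bound without that step; and for the $L^2$ bound on $\Phi_2$ the paper does exactly what you sketch---pull out one factor of $\int q\,dh_1dh_2$ by the $\operatorname{ess\,sup}$ in \eqref{fdt} and reduce to the $L^1$ integral already controlled.
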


\begin{proof}
By \eqref{tsres}, conditions \eqref{dtsrea}, \eqref{cfsres}  reduce to \eqref{dtrdss}, \eqref{dseraw}. Condition
\eqref{vftsers} clearly implies (C2), so we only have to check (C1).
For each  $\Lambda\in\mathcal B_0(\R^d)$, \eqref{vftsers} implies that
$$\int_{(\R^d)^2}Q(x_1,x_2,dh_1\times dh_2)\mathbf 1_\Lambda(x_1)\in L^1((\R^d)^2,dx_1\,dx_2).$$
By  \eqref{dseraw} and  \eqref{vftsers},
\begin{align*}
&\int_{\R^d}dx_1 \int_{\R^d}dx_2 \int_{(\R^d)^2}Q(x_1,x_2,dh_1\times dh_2)\mathbf 1_\Lambda(x_1+h_1)\\
&\quad =\int_{\R^d}dx_1 \int_{\R^d}dx_2 \int_{\R^d}dh_1 \int_{\R^d}dh_2\, q(x_2-x_1+h_2-h_1,-h_1,-h_2)\mathbf 1_\Lambda(x_1+h_1)\\
&\quad =\int_{\Lambda}dx_1 \int_{\R^d}dx_2 \int_{\R^d}dh_1 \int_{\R^d}dh_2\, q(x_2-x_1,-h_1,-h_2)<\infty.
\end{align*}
Analogously, using additionally \eqref{fdt}, we get
\begin{align*}
&\int_{\R^d}dx_1 \int_{\R^d}dx_2 \left(\int_{(\R^d)^2}Q(x_1,x_2,dh_1\times dh_2)\mathbf 1_\Lambda(x_1)\right)^2\\
&\quad= \int_{\Lambda}dx_1 \int_{\R^d}dx_2\int_{\R^d}dh_1 \int_{\R^d}dh_2\, q(x_2-x_1,h_1,h_2) \int_{\R^d}dh'_1 \int_{\R^d}dh'_2\,  q(x_2-x_1,h'_1,h'_2)\\
&\quad\le C_2 \int_{\Lambda}dx_1 \int_{\R^d}dx_2\int_{\R^d}dh_1 \int_{\R^d}dh_2\, q(x_2-x_1,h_1,h_2) <\infty,
\end{align*}
and
\begin{align*}
&\int_{\R^d}dx_1 \int_{\R^d}dx_2 \left(\int_{(\R^d)^2}Q(x_1,x_2,dh_1\times dh_2)\mathbf 1_\Lambda(x_1+h_1)\right)^2\\
&\quad\le \int_{\R^d}dx_1 \int_{\R^d}dx_2\int_{\R^d}dh_1 \int_{\R^d}dh_2\, q(x_2-x_1,h_1,h_2)\mathbf 1_\Lambda(x_1+h_1)\\
&\qquad\times \int_{\R^d}dh'_1 \int_{\R^d}dh'_2\,  q(x_2-x_1,h'_1,h'_2)\\
&\quad\le C_2\int_{\R^d}dx_1 \int_{\R^d}dx_2\int_{\R^d}dh_1 \int_{\R^d}dh_2\, q(x_2-x_1,h_1,h_2)\mathbf 1_\Lambda(x_1+h_1)<\infty.
\end{align*}
Thus, (C1) is satisfied. \end{proof}

The following proposition, whose proof is straightforward, presents a possible choice of a function $q(x,h_1,h_2)$.

\begin{proposition}\label{dersres}
Assume that functions $a:\R^d\to [0,\infty]$ and $b:\R^d\to[0,\infty]$ are measurable and even, i.e.,
$a(-x)=a(x)$, $b(-x)=b(x)$, $x\in\R^d$. Further assume that
$$ a,b\in L^1(\R^d,dx),\quad \operatornamewithlimits{ess\, sup}_{x\in\R^d}b(x)<\infty.$$
Set
$$ q(x,h_1,h_2):=a(h_1)a(h_2)\big(b(x)+b(x+h_2-h_1)\big).$$
Then the function $q(x,h_1,h_2)$ satisfies the conditions of Proposition \rom{\ref{fddtr}}, and so \linebreak $Q(x_1,x_2,dh_1\times dh_2)$ given by \eqref{tsres} satisfies the conditions of Theorem \rom{\ref{e6ue6}}.
\end{proposition}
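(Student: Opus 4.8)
The plan is to check, directly, the four hypotheses \eqref{dtrdss}, \eqref{dseraw}, \eqref{vftsers}, \eqref{fdt} of Proposition~\ref{fddtr} for the function $q(x,h_1,h_2)=a(h_1)a(h_2)\big(b(x)+b(x+h_2-h_1)\big)$. Since $a$ and $b$ are even and lie in $L^1(\R^d)$ with $b$ essentially bounded, each verification is a short substitution combined with Tonelli's theorem, so there is no real obstacle here.

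For the two algebraic identities I would simply substitute into the defining formula and use evenness. For \eqref{dtrdss}: $b(-x)=b(x)$ and $b(-x+h_2-h_1)=b\big(-(x+h_1-h_2)\big)=b(x+h_1-h_2)$, hence $q(-x,h_1,h_2)=a(h_1)a(h_2)\big(b(x)+b(x+h_1-h_2)\big)=q(x,h_2,h_1)$. For \eqref{dseraw}: inserting the arguments $(x+h_2-h_1,-h_1,-h_2)$ and using $a(-h_i)=a(h_i)$ gives $a(h_1)a(h_2)\big(b(x+h_2-h_1)+b\big((x+h_2-h_1)+(-h_2)-(-h_1)\big)\big)=a(h_1)a(h_2)\big(b(x+h_2-h_1)+b(x)\big)$, which is $q(x,h_1,h_2)$ after interchanging the two summands.

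For integrability I would use Tonelli's theorem to separate the $(h_1,h_2)$ and $x$ integrations, together with translation invariance of Lebesgue measure. In \eqref{vftsers}, $\int_{\R^d}\big(b(x)+b(x+h_2-h_1)\big)\,dx=2\|b\|_{L^1}$ for all $h_1,h_2$, and $\int_{(\R^d)^2}a(h_1)a(h_2)\,dh_1\,dh_2=\|a\|_{L^1}^2$, so the triple integral equals $2\|b\|_{L^1}\|a\|_{L^1}^2<\infty$. In \eqref{fdt}, for fixed $x$ the inner integral is $\int_{(\R^d)^2}a(h_1)a(h_2)\big(b(x)+b(x+h_2-h_1)\big)\,dh_1\,dh_2\le b(x)\|a\|_{L^1}^2+\|b\|_\infty\|a\|_{L^1}^2$, where I bound $b(x+h_2-h_1)\le\|b\|_\infty$ for a.e.\ value of its argument; since $b(x)\le\|b\|_\infty$ for a.e.\ $x$, the essential supremum over $x$ is at most $2\|b\|_\infty\|a\|_{L^1}^2<\infty$.

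The only point worth a word of care, rather than a genuine difficulty, is keeping track of the essential supremum in \eqref{fdt}: $\|b\|_\infty$ is finite by hypothesis, and the bound $b(\cdot)\le\|b\|_\infty$ a.e.\ is preserved under the translation $y=x+h_2-h_1$, so the estimate above is valid for a.e.\ $x$ simultaneously. Having verified \eqref{dtrdss}, \eqref{dseraw}, \eqref{vftsers}, and \eqref{fdt}, I would conclude by invoking Proposition~\ref{fddtr}, which yields that $Q(x_1,x_2,dh_1\times dh_2)$ defined by \eqref{tsres} satisfies the hypotheses of Theorem~\ref{e6ue6}.
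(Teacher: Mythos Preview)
Your verification is correct and is precisely the straightforward check the paper has in mind; the paper itself gives no explicit proof, stating only that the proposition is straightforward. Your handling of the essential supremum in \eqref{fdt} is fine (in fact the translation argument shows the bound $\int a(h_1)a(h_2)b(x+h_2-h_1)\,dh_1\,dh_2\le\|b\|_\infty\|a\|_{L^1}^2$ holds for \emph{every} $x$, not just a.e.\ $x$).
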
\vspace{2mm}

{\bf 2)} The following class of  stochastic dynamics of binary jumps is inspired by the paper \cite{BMT}. Let us assume that  measure $m(dh_1\times dh_2)$ is the pushforward of the Lebesgue measure $dh$ on $\R^d$ under the mapping $\R^d\ni h\mapsto(h,-h)\in(\R^d)^2$. Let us further assume that $ q(x_1,x_2,h_1,h_2)=q(x_2-x_1,h_1)$
for some measurable function $q:(\R^d)^2\to[0,\infty]$. Thus, for any measurable function $f:(\R^d)^4\to[0,\infty]$,
\begin{equation}\label{sfzaewa}
\int_{(\R^d)^2}Q(x_1,x_2,dh_1\times dh_2)f(x_1,x_2,h_1,h_2)=\int_{\R^d}dh\,q(x_2-x_1,h)f(x_1,x_2,h,-h).
\end{equation}
Hence, for any $F\in\FC$,
$$ (LF)(\gamma)=\sum_{\{x_1,\,x_2\}\subset\gamma}\int_{\R^d}dh\, q(x_2-x_1,h)
\big(F(\gamma\setminus\{x_1,x_2\}\cup\{x_1+h,x_2-h\})-F(\gamma)\big).$$
Completely analogously to Propositions \ref{fddtr}, \ref{dersres}, we get

\begin{proposition}\label{cdsesers} \rom{i)} Assume that \eqref{sfzaewa} holds and
\begin{align*}
q(-x,h)&=q(x,-h),\\
q(x,h)&=q(-x+2h,h)
\end{align*}
for  $x,h\in\R^d$. Further assume that
\begin{gather*}
q(x,h)\in L^1((\R^d)^2,dx\,dh),\\
\operatornamewithlimits{ess\,sup}_{x\in\R^d}\int_{\R^d}dh\,q(x,h)<\infty.
\end{gather*}
Then $Q(x_1,x_2,dh_1\times dh_2)$ satisfies the conditions of Theorem \rom{\ref{e6ue6}}.

\rom{ii)} Let functions $a$, $b$ be as in Proposition \rom{\ref{dersres}}. Set
$$ q(x,h):=a(h)b(x-h).$$
Then the function $q(x,h)$ satisfies the conditions of\/ \rom{i)}, and so  $Q(x_1,x_2,dh_1\times dh_2)$ given by \eqref{sfzaewa} satisfies the conditions of Theorem \rom{\ref{e6ue6}}.
\end{proposition}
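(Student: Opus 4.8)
The plan is to mimic the proofs of Propositions~\ref{fddtr} and~\ref{dersres}: for part~i) one reduces the hypotheses~\eqref{dtsrea},~\eqref{cfsres} and conditions (C1), (C2) of Theorem~\ref{e6ue6} to the conditions stated on $q(x,h)$, and for part~ii) one checks these conditions directly for $q(x,h)=a(h)b(x-h)$.

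For the symmetry reductions in~i), note first that since the measure $m$ is concentrated on the antidiagonal $\{(h,-h)\mid h\in\R^d\}$ and Lebesgue measure is invariant under $h\mapsto-h$, the pushforward $m'$ of $m$ under $(h_1,h_2)\mapsto(-h_1,-h_2)$ equals $m$ itself. Substituting $q(x_1,x_2,h_1,h_2)=q(x_2-x_1,h_1)$ into~\eqref{dtsrea} and testing against $\mathbf 1_{A\times B}$ via~\eqref{sfzaewa}, then changing variables $h\mapsto-h$, one sees that~\eqref{dtsrea} is equivalent to $q(-x,h)=q(x,-h)$. Likewise, substituting into~\eqref{cfsres}, testing against an arbitrary nonnegative measurable function of $(h_1,h_2)$, and using $m'=m$ together with the change $h\mapsto-h$, one finds that~\eqref{cfsres} becomes $q(x,h)=q(x-2h,-h)$, which in the presence of the first identity is the same as $q(x,h)=q(-x+2h,h)$. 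Thus the two symmetry hypotheses of~i) are precisely~\eqref{dtsrea},~\eqref{cfsres}.

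Condition (C2) is immediate: by~\eqref{sfzaewa}, $\int_{\R^d}dx_2\int_{(\R^d)^2}Q(x_1,x_2,dh_1\times dh_2)=\int_{\R^d}dx_2\int_{\R^d}dh\,q(x_2-x_1,h)$, which equals the $L^1((\R^d)^2,dx\,dh)$-norm $\|q\|_{L^1}$ and is in particular finite and independent of $x_1$. For (C1), fix $\Lambda\in\mathcal B_0(\R^d)$; we must show $(x_1,x_2)\mapsto\int_{\R^d}dh\,q(x_2-x_1,h)\bigl(\mathbf 1_\Lambda(x_1)+\mathbf 1_\Lambda(x_1+h)\bigr)$ lies in $L^1\cap L^2$ over $dx_1\,dx_2$. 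For the $L^1$-norm, the $\mathbf 1_\Lambda(x_1)$-term contributes $|\Lambda|\,\|q\|_{L^1}$ by Tonelli, and for the $\mathbf 1_\Lambda(x_1+h)$-term the substitutions $y_1=x_1+h$ and then $u=x_2-x_1+h$ give the same value. For the $L^2$-norm use $(a+b)^2\le2a^2+2b^2$: the $\mathbf 1_\Lambda(x_1)$-part is bounded, after expanding the square into two $h$-integrations, by extracting $\operatorname{ess\,sup}_x\int_{\R^d}dh\,q(x,h)$ from one of them, leaving a constant times $|\Lambda|\,\|q\|_{L^1}$; for the $\mathbf 1_\Lambda(x_1+h)$-part one writes $\bigl(\int dh\,q\,\mathbf 1_\Lambda(x_1+h)\bigr)^2\le\bigl(\int dh\,q\bigr)\bigl(\int dh\,q\,\mathbf 1_\Lambda(x_1+h)\bigr)$, bounds the first factor by the same essential supremum, and is left with the $L^1$-quantity already estimated. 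Hence (C1) holds and Theorem~\ref{e6ue6} applies.

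For part~ii), with $q(x,h)=a(h)b(x-h)$ and $a,b$ even: since $b$ is even, $q(-x,h)=a(h)b(x+h)$, and since $a$ is even, $q(x,-h)=a(h)b(x+h)$, so the first identity holds; since $b$ is even, $q(-x+2h,h)=a(h)b(-x+h)=a(h)b(x-h)=q(x,h)$, so the second holds; Tonelli gives $\|q\|_{L^1((\R^d)^2)}=\|a\|_{L^1}\|b\|_{L^1}<\infty$; and $\int_{\R^d}dh\,a(h)b(x-h)\le\bigl(\operatorname{ess\,sup}_y b(y)\bigr)\,\|a\|_{L^1}$ uniformly in $x$. So all hypotheses of~i) hold. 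The whole argument is routine; the one spot requiring genuine care is the $L^2$-half of (C1), where the Cauchy--Schwarz-type bound $\bigl(\int q\,\mathbf 1\bigr)^2\le\bigl(\int q\bigr)\bigl(\int q\,\mathbf 1\bigr)$ must be combined with the essential-supremum hypothesis and the translation $x_1+h\mapsto y_1$---everything else reduces to Tonelli's theorem and the translation invariance of Lebesgue measure, exactly as in the proof of Proposition~\ref{fddtr}.
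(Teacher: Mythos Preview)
Your proof is correct and follows exactly the approach the paper indicates: the paper gives no proof for this proposition beyond the remark ``completely analogously to Propositions~\ref{fddtr}, \ref{dersres},'' and you have carried out precisely that analogy. The only minor comment is that the bound $\bigl(\int q\,\mathbf 1\bigr)^2\le\bigl(\int q\bigr)\bigl(\int q\,\mathbf 1\bigr)$ is just monotonicity $\int q\,\mathbf 1\le\int q$ applied to one factor, not a genuine Cauchy--Schwarz argument, but this does not affect the validity.
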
\vspace{2mm}

\section{Uniqueness of dynamics}\label{gufr75eses}

We will now show that the Markov pre-generator \eqref{tyrsar5a} defined on $\FC$, with $Q(x_1,x_2,dh_1\times dh_2)$ being as in Proposition~\ref{fddtr}, or as in Proposition~\ref{cdsesers}, i) uniquely identifies a Markov process on $\Gamma$.

\begin{theorem}\label{huigfmkih}
Let $Q(x_1,x_2,dh_1\times dh_2)$ be either as in Proposition~\ref{fddtr}, or as in Proposition~\ref{cdsesers}, i). Then the operator $(-L,\FC)$  is essentially selfadjoint in $L^2(\Gamma,\pi_z)$, so that $(-L,D(L))$ is the closure  of $(-L,\FC)$ in $L^2(\Gamma,\pi_z)$.
\end{theorem}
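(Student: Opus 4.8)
The plan is to realize $L^2(\Gamma,\pi_z)$ as the symmetric Fock space over $L^2(\R^d,z\,dx)$ via the chaos decomposition (Wiener--It\^o--Segal isomorphism), to compute an explicit formula for the action of the pre-generator $L$ on this Fock space, and then to prove essential self-adjointness by exhibiting a dense set of analytic vectors or by using Nelson's commutator theorem with a suitable number-operator-type comparison operator. Concretely, I would first note that the span of the "exponential vectors" $e(\varphi):=\sum_{n\ge 0}\frac1{n!}\varphi^{\otimes n}$, $\varphi\in C_0(\R^d)$ (equivalently, the functions $F\in\FC$ of the form $\exp(\la\log(1+\varphi),\gamma\ra)$ with $1+\varphi>0$), is a core for any reasonable candidate extension, and that $\FC$ and this span generate the same closure.

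The key computational step is to derive the Fock-space form of $L$. Using the Mecke identity and the combinatorial identity \eqref{fdrssgdtd} (and its polarizations), together with the definition \eqref{tyrsar5a} and the representations \eqref{tsres} or \eqref{sfzaewa}, one rewrites, for an $n$-particle test function $f^{(n)}$, the kernel of $Lf^{(n)}$. Because binary jumps move exactly two particles, the resulting operator on the Fock space decomposes into a sum of operators acting on at most a fixed, small number of coordinates: a "diagonal" piece coming from the jump of a pair both of whose arrival points are reintegrated against the measure, and pieces shifting chaos degree by $0,\pm 1$ — precisely the three-diagonal (Jacobi) structure mentioned in the introduction. I would organize this by writing $L$ as a sum over: (a) a genuinely two-body operator on each pair of particles within the $n$-particle sector; (b) one-body "loss" terms arising when one arrival point is integrated out; (c) a constant term. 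The symmetry and the hypotheses \eqref{dtrdss}--\eqref{fdt} (resp. the hypotheses of Proposition~\ref{cdsesers}) guarantee the relevant integral kernels define bounded operators between consecutive chaos sectors with norms growing at most linearly in $n$, because of the uniform bound \eqref{fdt} (resp. $\operatornamewithlimits{ess\,sup}_x\int q(x,h)\,dh<\infty$) and the $L^1$-integrability, which control the one- and two-body pieces exactly as in Lemma~\ref{fdrytrs}.

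Granting such estimates, essential self-adjointness follows from Nelson's commutator theorem: take $N$ to be (a suitable affine function of) the number operator on Fock space, which is self-adjoint with core the finite-particle vectors; then $\|L\psi\|\le C\|(N+1)\psi\|$ on the finite-particle vectors by the per-sector operator-norm bounds, and $|\la L\psi,N\psi\ra-\la N\psi,L\psi\ra|\le C\|(N+1)^{1/2}\psi\|^2$ because shifting a chaos degree by $\pm1$ changes $N$ by $\pm1$, so the commutator $[L,N]$ is again controlled by $N+1$ in form sense. Since $(L,\FC)$ is symmetric (Lemma~\ref{vtrsdty}) and $\FC$ contains, or has the same closure as, the finite-particle vectors, the commutator theorem yields that $(-L,\FC)$ is essentially self-adjoint, and its closure then necessarily coincides with the Friedrichs extension $(-L,D(L))$ from Theorem~\ref{e6ue6}.

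The main obstacle is the explicit Fock-space computation in step two: one must carefully track, after applying the Mecke identity, how the substitution $\{x_1,x_2\}\mapsto\{x_1+h_1,x_2+h_2\}$ redistributes particles among the chaos sectors, keep the symmetrization bookkeeping correct, and verify that every emerging integral operator is bounded sector-by-sector with a linear-in-$n$ bound — this is where conditions like \eqref{fdt} and the change-of-variables identities \eqref{dtrdss}--\eqref{dseraw} are used in an essential way, and where the special structure (only two particles move, rate depending only on $x_2-x_1$ and on the increments) is what makes the Jacobi form and hence the $N+1$ comparison possible. Once the formula and these norm bounds are in hand, the application of Nelson's theorem is routine.
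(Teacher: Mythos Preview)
Your overall strategy---realize $L^2(\Gamma,\pi_z)$ as the symmetric Fock space, compute the generator there, observe the three-diagonal (Jacobi) structure, and then prove essential self-adjointness by an abstract criterion---is exactly the route the paper takes. The gap is quantitative, and it breaks the last step.

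You claim the per-sector norms grow \emph{linearly} in $n$. They do not. The diagonal (particle-number-preserving) part of $-\widetilde L$ contains a genuine two-body piece, of the form
\[
\int z\,dx_1\int z\,dx_2\int dh_1\int dh_2\,q(x_2-x_1,h_1,h_2)\,\di^\dag_{x_1}\di^\dag_{x_2}\di_{x_1}\di_{x_2},
\]
and on $\mathcal F^{(n)}$ this has operator norm of order $n(n-1)$, not $n$; this is inevitable because the configuration-space sum in \eqref{tyrsar5a} runs over \emph{pairs} $\{x_1,x_2\}\subset\gamma$. Likewise the off-diagonal pieces $J^{\pm}$ (two creations and one annihilation, or vice versa) have norm of order $n\sqrt{n+1}\sim n^{3/2}$. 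The paper establishes precisely these bounds (see \eqref{hjcxhs}, \eqref{wvvf}, \eqref{jcfkugftcf}, \eqref{hgydsers}).

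With quadratic growth on the diagonal, Nelson's commutator theorem with the number operator fails already at the first hypothesis $\|L\psi\|\le C\|(N{+}1)\psi\|$. Trying $(N{+}1)^\alpha$ for $\alpha\ge 2$ rescues the relative bound but not the commutator estimate: the off-diagonal contribution to the form commutator is of order $\alpha\,n^{\alpha-1}\cdot n^{3/2}=n^{\alpha+1/2}$, while $\|(N{+}1)^{\alpha/2}\psi\|^2$ only controls $n^{\alpha}$, so the required inequality fails by a factor $n^{1/2}$ for every $\alpha$. Hence the commutator-theorem route, as you outlined it, does not close.

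The paper circumvents this by using Nussbaum's (semi-analytic vector) criterion instead: from the sector bounds $\|J^0\restriction\mathcal F^{(n)}\|\le Cn^2$ and $\|J^{\pm}\restriction\mathcal F^{(n)}\|\le Cn^{3/2}$ one gets $\|(-L)^k f^{(n)}\|\le C^k\big((n{+}k)!/n!\big)^2\|f^{(n)}\|$-type control, which yields $\sum_{k\ge1}\frac{t^k}{(2k)!}\|(-L)^k f^{(n)}\|<\infty$ for small $t>0$ (formula \eqref{kjdsrt}). Nussbaum's theorem then gives essential self-adjointness on the finite-particle vectors, hence on $\FC$. So: keep your Fock-space computation, but correct the growth rate to quadratic and swap Nelson's commutator theorem for Nussbaum's criterion.
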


\begin{proof} We will only prove the theorem in the case of the dynamics as in Proposition~\ref{fddtr}, which is the harder case.
Denote by $(-\overline{L},D(\overline{L}))$ the closure of the symmetric operator $(-L,\FC)$ in $L^2(\Gamma,\pi_z)$. Thus, the operator $(-L,D(L))$ is an extension of $(-\overline{L},D(\overline{L})$.
Analogously to the proof of Lemma~\ref{fdrytrs} and Proposition~\ref{fddtr}, one can show that, for each $f\in C_0(\R^d)$ and $n\in\mathbb N$, $\la f,\cdot\ra^n\in D(\overline{L})$. Hence, by the polarization identity (e.g.\ \cite[Chap.~2, formula~(2.7)]{BK}),  we have
\begin{equation}\label{dsrer}\la f_1,\cdot\ra\dotsm \la f_n,\cdot\ra\in D(\overline{L}), \quad f_1,\dots,f_n\in C_0(\R^d),\ n\in \mathbb N.\end{equation}
Let $\mathscr P$ denote the set of all functions on $\Gamma$ which are finite sums of functions as in \eqref{dsrer} and constants. Thus, $\mathscr P$ is a set of polynomials on $\Gamma$, and $\mathscr P\subset D(\overline L)$.

For a real Hilbert space $\mathcal H$, denote by $\mathcal F(\mathcal H)$ the symmetric Fock space over $\mathcal H$.
Thus, $\mathcal F(\mathcal H)$ is the Hilbert space
$$ \mathcal F(\mathcal H)=\bigoplus_{n=0}^\infty \mathcal F^{(n)}(\mathcal H),$$
where $\mathcal F^{(0)}(\mathcal H):=\mathbb R$, and for $n\in\mathbb N$,  $\mathcal F^{(n)}(\mathcal H)$ coincides with $\mathcal H^{\odot n}$ as a set, and for any $f^{(n)},g^{(n)}\in \mathcal F^{(n)}(\mathcal H)$
$$ (f^{(n)},g^{(n)})_{\mathcal F(\mathcal H)}:=(f^{(n)},g^{(n)})_{\mathcal H^{\odot n}}n!\,.$$
 Here $\odot$ stands for symmetric tensor product.

Let
$$ I: L^2(\Gamma,\pi_z)\to\mathcal F(L^2(\R^d,z\,dx))$$
denote the unitary isomorphism which is derived through multiple stochastic integrals with respect to the centered Poisson random measure on $\R^d$ with intensity measure $z\,dx$, see e.g.\ \cite{Surgailis}. Denote by $\widetilde{\mathscr P}$ the subset of $\mathcal F(L^2(\R^d,z\,dx))$ which is the linear span of vectors of the form $$f_1\odot f_2\odot\dots\odot f_n,\quad f_1,\dots,f_n\in C_0(\R^d),\ n\in \mathbb N,$$
and the vacuum vector $\Psi=(1,0,0,\dots)$. For any $f\in C_0(\R^d)$, denote by $M_f$ the operator of multiplication by the function $\la f,\cdot\ra$ in $L^2(\Gamma,\pi_z)$. Using the representation of the operator $IM_fI^{-1}$ as a sum of creation, neutral, and annihilation operators in the Fock space (see e.g.\ \cite{Surgailis}), we easily conclude that $I \mathscr P=\widetilde{\mathscr P}$.

We define a quadratic form $(\widetilde{\mathcal E},D(\widetilde{\mathcal E}))$ by $$\widetilde{\mathcal E}(f,g):={\mathcal E}(I^{-1}f,I^{-1}g),\quad f,g\in D(\widetilde{\mathcal E}):=ID(\mathcal E),$$
on $\mathcal F(L^2(\R^d,z\,dx))$, and let $(-\widetilde L, D(\widetilde L))$ denote the generator of this form. Thus, $D(\widetilde L)=ID(L)$ and $\widetilde L= ILI^{-1}$ on $D(\widetilde L)$.

For each $x\in\R^d$, we define an annihilation operator at $x$ as follows: $\di_x: \widetilde{\mathscr P}\to \widetilde{\mathscr P}$ is a linear mapping given through
$$ \di_x\Psi:=0,\quad \di_x f_1\odot f_2\odot\dots\odot f_n:=\sum_{i=1}^n f_i(x)\, f_1\odot f_2\odot\dots\odot \check f_i \odot \dots\odot f_n,$$
where $\check f_i$ denotes the absence of  $f_i$. We will preserve the notation $\di_x$
for the operator $I\di_x I^{-1}:\mathscr P\to\mathscr P$. This operator admits the following explicit representation
$$ \di_x F(\gamma)= F(\gamma\cup \{x\})-F(\gamma)$$
for $\pi_z$-a.a.\ $\gamma\in\Gamma$, see e.g.\ \cite{AKR,IK,NV}.
By  the Mecke formula, for any $F\in\mathscr P$,
\begin{align*}&\mathcal E(F,F)=\frac14\int_{\R^d}z\,dx_1\int_{\R^d}z\,dx_2 \,
\int_{\R^d}dh_1\int_{\R^d}dh_2\, q(x_2-x_1,h_1,h_2)\\
&\qquad\times \int_\Gamma \pi_z(d\gamma)
\big(F(\gamma\cup\{x_1+h_1,x_2+h_2\})-F(\gamma\cup\{x_1,x_2\})\big)^2.
  \end{align*}
  Noting that
  $$F(\gamma\cup\{x_1,x_2\})-F(\gamma)=(\di_{x_1}\di_{x_2}-\di_{x_1}-\di_{x_2})F(\gamma),$$
we thus get, for any $f\in\widetilde{\mathscr P}$,
\begin{align}
&\widetilde{\mathcal E}(f,f)=\frac14
\int_{\R^d}z\,dx_1\int_{\R^d}z\,dx_2 \,
\int_{\R^d}dh_1\int_{\R^d}dh_2\, q(x_2-x_1,h_1,h_2)\notag\\
&\qquad\times\big\|
(\di_{x_1+h_1}\di_{x_2+h_2}-\di_{x_1+h_1}-\di_{x_2+h_2}-\di_{x_1}\di_{x_2}+\di_{x_1}+\di_{x_2})f
\big\|^2_{\mathcal F(L^2(\R^d,z\,dx))}.
  \label{fdsyuio}\end{align}
Hence, at least heuristically, the generator of this form  has representation
\begin{align} -\widetilde L&=\frac14
\int_{\R^d}z\,dx_1\int_{\R^d}z\,dx_2 \,
\int_{\R^d}dh_1\int_{\R^d}dh_2\, q(x_2-x_1,h_1,h_2)\notag\\
&\quad\times(\di^\dag_{x_1+h_1}\di^\dag_{x_2+h_2}-\di^\dag_{x_1+h_1}-\di^\dag_{x_2+h_2}-\di^\dag_{x_1}\di^\dag_{x_2}+\di^\dag_{x_1}+\di^\dag_{x_2})\notag\\
&\quad\times(\di_{x_1+h_1}\di_{x_2+h_2}-\di_{x_1+h_1}-\di_{x_2+h_2}-\di_{x_1}\di_{x_2}+\di_{x_1}+\di_{x_2}),
\label{vctese}\end{align}
where   $\di^\dag_x$ denotes a creation operator at point $x\in\R^d$ ($\di^\dag_x$ being rather an operator-valued distribution, see e.g. \cite{HKPS})
Noting that the operators $\di_x$, $x\in\R^d$, commute, we get from \eqref{vctese} and \eqref{dtrdss}, \eqref{dseraw}:
$$-\widetilde L_0=J^++J^0+J^-,$$
where
\begin{align}
&J^+:=\int_{\R^d}z\,dx_1\int_{\R^d}z\,dx_2 \,
\int_{\R^d}dh_1\int_{\R^d}dh_2\, q(x_2-x_1,h_1,h_2)\big(-\di^\dag_{x_1}\di^\dag_{x_2}\di_{x_1}+\di^\dag_{x_1}\di^\dag_{x_2}\di_{x_1+h_1}\big),\notag\\
&J^0:= \int_{\R^d}z\,dx_1\int_{\R^d}z\,dx_2 \,
\int_{\R^d}dh_1\int_{\R^d}dh_2\, q(x_2-x_1,h_1,h_2)\bigg(\frac12\di^\dag_{x_1}\di^\dag_{x_2}\di_{x_1}\di_{x_2}\notag\\
&\qquad\text{}
+\frac12\di^\dag_{x_1}\di^\dag_{x_2}\di_{x_1+h_1}\di_{x_2+h_2}+\di^\dag_{x_1}\di_{x_1}+\di^\dag_{x_1}\di_{x_2}-\di^\dag_{x_1}\di_{x_1+h_1}-\di^\dag_{x_1}\di_{x_2+h_2}\bigg),
\end{align}
and $J^-$ is the formal adjoint of $J^+$.
Note that the operators of creation and annihilation in the above formulas are in Wick order, i.e., the creation operators act after the annihilation operators. Thus, one may hope to give a rigorous sense to the above integrals by using the corresponding quadratic forms, see e.g.\cite[Chapter X.7]{RS}.

Denote by $\mathcal F_{\mathrm{fin}}(L^2(\R^d,z\,dx))$ the subset of $\mathcal F(L^2(\R^d,z\,dx))$ consisting of all finite sequences $f=(f^{(0)}, f^{(1)},\dots,f^{(n)},0,0,\dots)$, $f^{(i)}\in \mathcal F^{(i)}(L^2(\R^d,z\,dx))$, $i=1,\dots,n$, $n\in\mathbb N$. Clearly $\widetilde{\mathscr P}\subset \mathcal F_{\mathrm{fin}}(L^2(\R^d,z\,dx))$. Using \eqref{fdsyuio}, we conclude by approximation that $\mathcal F_{\mathrm{fin}}(L^2(\R^d,z\,dx))\subset D(\widetilde {\mathcal E})$.

Using the corresponding quadratic form, we get, for each
$f^{(n)}\in \mathcal F^{(n)}(L^2(\R^d,z\,dx))$,
\begin{align*}
&\bigg(\int_{\R^d}z\,dx_1\int_{\R^d}z\,dx_2 \,
\int_{\R^d}dh_1\int_{\R^d}dh_2\, q(x_2-x_1,h_1,h_2)\di^\dag_{x_1}\di^\dag_{x_2}\di_{x_1}
f^{(n)}\bigg)(y_1,\dots,y_{n+1})\\
&\quad=n\bigg(f^{(n)}(y_1,\dots,y_n)\int_{\R^d}dh_1\int_{\R^d}dh_2\, q(y_{n+1}-y_n,h_1,h_2)\bigg)^\sim,
\end{align*}
where $(\cdot)^\sim$ denotes symmetrization. We have, by \eqref{vftsers} and \eqref{fdt}:
\begin{align*}
&\int_{(\R^d)^{n+1}} \bigg(f^{(n)}(y_1,\dots,y_n)\int_{\R^d}dh_1\int_{\R^d}dh_2\, q(y_{n+1}-y_n,h_1,h_2)\bigg)^2 z\,dy_1\dotsm z\,dy_{n+1}\\
&\quad\le C_3\int_{(\R^d)^n}z\,dy_1\dotsm z\,dy_{n}\,
f^{(n)}(y_1,\dots,y_n)^2\int_{\R^d} z\,dy_{n+1}\int_{\R^d}dh_1\int_{\R^d}dh_2\, q(y_{n+1}-y_n,h_1,h_2)\\
&\quad\le C_4\|f^{(n)}\|^2_{L^2((\R^d)^n,z\,dy_1\dotsm z\,dy_n)}.
\end{align*}
Analogously,
\begin{align*}
&\bigg(\int_{\R^d}z\,dx_1\int_{\R^d}z\,dx_2
\int_{\R^d}dh_1\int_{\R^d}dh_2\, q(x_2-x_1,h_1,h_2)\di^\dag_{x_1}\di^\dag_{x_2}\di_{x_1+h_1}
f^{(n)}\bigg)(y_1,\dots,y_{n+1})\\
&\quad=n\bigg(\int_{\R^d}dh_1\int_{\R^d}dh_2\, q(y_{n+1}-y_n,h_1,h_2)f^{(n)}(y_1,\dots,y_{n-1},y_n+h_1)\bigg)^\sim,
\end{align*}
and by the Cauchy inequality
\begin{align*}
&\int_{(\R^d)^{n+1}} \bigg(\int_{\R^d}dh_1\int_{\R^d}dh_2\, q(y_{n+1}-y_n,h_1,h_2)f^{(n)}(y_1,\dots,y_{n-1},y_n+h_1)\bigg)^2 z\,dy_1\dotsm z\,dy_{n+1}\\
&\quad\le \int_{(\R^d)^{n+1}}z\,dy_1\dotsm z\,dy_{n+1}\int_{\R^d}dh_1\int_{\R^d}dh_2\int_{\R^d}dh'_1\int_{\R^d}dh'_2\notag \\
&\qquad\times q(y_{n+1}-y_n,h_1,h_2)q(y_{n+1}-y_n,h'_1,h'_2)f^{(n)}(y_1,\dots,y_{n-1},y_n+h_1)^2\notag\\
&\quad\le C_5\int_{(\R^d)^{n+1}}z\,dy_1\dotsm z\,dy_{n+1}\int_{\R^d}dh_1\int_{\R^d}dh_2\, q(y_{n+1}-y_n,h_1,h_2)\notag\\
&\qquad\times f^{(n)}(y_1,\dots,y_{n-1},y_n+h_1)^2\notag\\
&\quad =C_5 \int_{(\R^d)^{n}}z\,dy_1\dotsm z\,dy_{n}\, f^{(n)}(y_1,\dots,y_n)^2\int_{\R^d}dh_1\int_{\R^d}dh_2\int_{\R^d}z\,dy_{n+1}\notag\\
&\qquad\times q(y_{n+1}-y_n+h_1,h_1,h_2)\\
&\quad= C_6 \|f^{(n)}\|^2_{L^2((\R^d)^n,z\,dy_1\dotsm z\,dy_n)}.
\end{align*}
Hence, $J^+$ can be realized as a linear operator on $\mathcal F_{\mathrm{fin}}(L^2(\R^d,z\,dx))$ and
 \begin{equation}\label{hjcxhs}\|J^+\restriction \mathcal F^{(n)}(L^2(\R^d,z\,dx))\|_{\mathscr L(\mathcal F^{(n)}(L^2(\R^d,z\,dx)),\,\mathcal F^{(n+1)}(L^2(\R^d,z\,dx)))} \le C_7\, n\sqrt{n+1},\end{equation}
where the constant $C_7$ is independent of $n$.
Furthermore,  $J^-$ can be realized as the restriction to $\mathcal F_{\mathrm{fin}}(L^2(\R^d,z\,dx))$ of the adjoint operator of $J^+$, and by \eqref{hjcxhs}
 \begin{equation}\label{wvvf}\|J^-\restriction \mathcal F^{(n+1)}(L^2(\R^d,z\,dx))\|_{\mathscr L(\mathcal F^{(n+1)}(L^2(\R^d,z\,dx)),\,\mathcal F^{(n)}(L^2(\R^d,z\,dx)))} \le C_7\,n\sqrt{n+1}.\end{equation}

 Using again the corresponding quadratic form, we get
 \begin{align*}
 &(J_1^0 f^{(n)})(y_1,\dots,y_n):=\\
 &\quad=\bigg(
 \int_{\R^d}z\,dx_1\int_{\R^d}z\,dx_2
\int_{\R^d}dh_1\int_{\R^d}dh_2\, q(x_2-x_1,h_1,h_2)\di^\dag_{x_1}\di^\dag_{x_2}\di_{x_1}\di_{x_2} f^{(n)} \bigg)(y_1,\dots,y_n)\\
&\quad=n(n-1)\bigg(f^{(n)}(y_1,\dots,y_n)
\int_{\R^d}dh_1\int_{\R^d}dh_2\, q(y_2-y_{1},h_1,h_2)\bigg)^\sim,
\end{align*}
and hence
  \begin{equation}\label{jcfkugftcf}\|J^0_1\restriction \mathcal F^{(n)}(L^2(\R^d,z\,dx))\|_{\mathscr L(\mathcal F^{(n)}(L^2(\R^d,z\,dx)),\,\mathcal F^{(n)}(L^2(\R^d,z\,dx)))} \le
 C_8\, n(n-1)  .\end{equation}
 Analogously,
\begin{align*}
 &(J_2^0 f^{(n)})(y_1,\dots,y_n):=\\
 &\quad=\bigg(
 \int_{\R^d}z\,dx_1\int_{\R^d}z\,dx_2
\int_{\R^d}dh_1\int_{\R^d}dh_2\, q(x_2-x_1,h_1,h_2)\di^\dag_{x_1}\di^\dag_{x_2}\di_{x_1+h_1}\di_{x_2+h_2} f^{(n)} \bigg)(y_1,\dots,y_n)\\
&\quad=n(n-1)\bigg(\int_{\R^d}dh_1\int_{\R^d}dh_2\,
 q(y_2-y_{1},h_1,h_2)f^{(n)}(y_1+h_1,y_2+h_2,y_3\dots,y_n)\bigg)^\sim.
\end{align*}
 We have, by the Cauchy inequality,
\begin{align*}
&\int_{(\R^d)^n}\bigg(\int_{\R^d}dh_1\int_{\R^d}dh_2\,
 q(y_2-y_{1},h_1,h_2)f^{(n)}(y_1+h_1,y_2+h_2,y_3\dots,y_n)\bigg)^2z\,dy_1\dotsm z\,dy_n\\
 &\quad\le \int_{(\R^d)^n}z\,dy_1\dotsm z\,dy_n
 \int_{\R^d}dh_1\int_{\R^d}dh_2\int_{\R^d}dh'_1\int_{\R^d}dh'_2\\
 &\qquad\times q(y_2-y_{1},h_1,h_2)q(y_2-y_{1},h'_1,h'_2)f^{(n)}(y_1+h_1,y_2+h_2,y_3\dots,y_n)^2\\
 &\quad\le C_9 \int_{(\R^d)^n}z\,dy_1\dotsm z\,dy_n
 \int_{\R^d}dh_1\int_{\R^d}dh_2\, q(y_2-h_2-y_1+h_1,h_1,h_2)f^{(n)}(y_1,\dots,y_n)^2\\
 &\quad= C_9 \int_{(\R^d)^n}z\,dy_1\dotsm z\,dy_n
 \int_{\R^d}dh_1\int_{\R^d}dh_2\, q(y_2+h_2-y_1-h_1,-h_1,-h_2)f^{(n)}(y_1,\dots,y_n)^2\\
 &\quad = C_9 \int_{(\R^d)^n}z\,dy_1\dotsm z\,dy_n
 \int_{\R^d}dh_1\int_{\R^d}dh_2\, q(y_2-y_1,h_1,h_2)f^{(n)}(y_1,\dots,y_n)^2\\
 &\quad\le C_{10}\|f^{(n)}\|^2_{L^2((\R^d)^n,z\,dy_1\dotsm z\,dy_n)}.
 \end{align*}
 Therefore, an estimate similar to \eqref{jcfkugftcf} holds for $J_2^0$.

 We next have:
 \begin{align*}
 &(J_3^0 f^{(n)})(y_1,\dots,y_n):=\\
 &\quad=\bigg(
 \int_{\R^d}z\,dx_1\int_{\R^d}z\,dx_2
\int_{\R^d}dh_1\int_{\R^d}dh_2\, q(x_2-x_1,h_1,h_2)\di^\dag_{x_1}\di_{x_1} f^{(n)} \bigg)(y_1,\dots,y_n)\\
&\quad=n \left(\int_{\R^d}z\,dy\int_{\R^d}dh_1\int_{\R^d}dh_2\, q(y,h_1,h_2)\right) f^{(n)}(y_1,\dots,y_n).
\end{align*}
 Hence
 \begin{equation}\label{jhgtydsa5re}\|J^0_3\restriction \mathcal F^{(n)}(L^2(\R^d,z\,dx))\|_{\mathscr L(\mathcal F^{(n)}(L^2(\R^d,z\,dx)),\,\mathcal F^{(n)}(L^2(\R^d,z\,dx)))} \le
 C_{11}\, n  .\end{equation}
(In fact, in this case we have equality, rather than inequality.) Next
 \begin{align*}
 &(J_4^0 f^{(n)})(y_1,\dots,y_n):=\\
 &\quad=\bigg(
 \int_{\R^d}z\,dx_1\int_{\R^d}z\,dx_2
\int_{\R^d}dh_1\int_{\R^d}dh_2\, q(x_2-x_1,h_1,h_2)\di^\dag_{x_1}\di_{x_2} f^{(n)} \bigg)(y_1,\dots,y_n)\\
&\quad=n \bigg(\int_{\R^d}z\,dx\int_{\R^d}dh_1\int_{\R^d}dh_2\, q(x-y_1,h_1,h_2)f^{(n)}(x,y_2,\dots,y_n)\bigg)^\sim.
\end{align*}
Hence, by the Cauchy inequality, we easily conclude that $J_4^0$ satisfies an estimate similar to \eqref{jhgtydsa5re}. The two remaining terms  with $\di^\dag_{x_1}\di_{x_1+h_1}$ and $\di^\dag_{x_1}\di_{x_2+h_2}$ can be treated similarly. Thus, $J^0$ can be realized as a linear operator on $\mathcal F_{\mathrm{fin}}(L^2(\R^d,z\,dx))$ and
\begin{equation}\label{hgydsers}\|J^0\restriction \mathcal F^{(n)}(L^2(\R^d,z\,dx))\|_{\mathscr L(\mathcal F^{(n)}(L^2(\R^d,z\,dx)),\,\mathcal F^{(n)}(L^2(\R^d,z\,dx)))} \le
 C_{12}\, n(n-1)  .\end{equation}

 Denote by $(-\mathcal L,D(\mathcal L))$ the closure of the operator $(-\widetilde L,\widetilde {\mathscr P})$ in $\mathcal F(L^2(\R^d,z\,dx))$. Thus,
 $(-\widetilde L,D(\widetilde L))$ is an extension of
 $(-\mathcal L,D(\mathcal L))$.
 We now easily see that $$\mathcal F_{\mathrm{fin}}(L^2(\R^d,z\,dx))\subset D(\mathcal L)$$ and the action of $-\mathcal L$ on $\mathcal F_{\mathrm{fin}}(L^2(\R^d,z\,dx))$ is indeed given by the above formulas.
  By \eqref{hjcxhs}, \eqref{wvvf} and \eqref{hgydsers}, for each $f^{(n)}\in \mathcal F^{(n)}(L^2(\R^d,z\,dx))$, there exists $t>0$ such that
$$\sum_{k=1}^\infty\frac{t^k}{(2k)!}\|(-\mathcal  L)^k f^{(n)}\|_{\mathcal F(L^2(\R^d,z\,dx))}<\infty.$$
Since $\mathscr P\subset D(\overline{L})$, we therefore conclude that  $I^{-1} \mathcal F_{\mathrm{fin}}(L^2(\R^d,z\,dx))\subset D( \overline{L})$ and for each $F\in I^{-1} \mathcal F_{\mathrm{fin}}(L^2(\R^d,z\,dx))$ there exists $t>0$ such that
\begin{equation}\label{kjdsrt}\sum_{k=1}^\infty\frac{t^k}{(2k)!}\|(- \overline L)^k F\|_{L^2(\Gamma,\pi_z)}<\infty.\end{equation}
Hence, by
 the Nussbaum theorem (see e.g.\ \cite[Theorem X.40]{RS}),
 the operator $(-\overline L,D(\overline L))$ is essentially selfadjoint on $I^{-1} \mathcal F_{\mathrm{fin}}(L^2(\R^d,z\,dx))$.
 From here the statement of the theorem follows.
\end{proof}

\section{Diffusion approximation}\label{vcydryds}
We will now consider a diffusion approximation for the stochastic dynamics as in Proposition~\ref{dersres}.

Denote by $\FCC$ the space of all functions of the form \eqref{drtses}, where  $N\in\N$, $\varphi_1,\dots,\varphi_N\in C^\infty_0(\mathbb R^d)$ and $g_F\in C^\infty_{\mathrm b}(\R^N)$. Here,
where $C^\infty_0(\mathbb R^d)$ and $C^\infty_{\mathrm b}(\R^N)$ denote the space of all smooth functions on $\R^d$ with compact support and the space of all smooth, bounded functions on $\R^N$ whose all derivatives are bounded, respectively. Clearly $\FCC\subset\FC$ and $\FCC$ is a dense subset of $L^2(\Gamma,\pi_z)$.

For a function $F\in \FCC$, $\gamma\in\Gamma$, and $x\in\gamma$, we denote
\begin{equation}\label{fdrtdtr}\nabla_x F(\gamma):= \nabla_y F(\gamma\setminus\{x\}\cup\{y\})\Big|_{y=x},\end{equation}
where $\nabla_y$ stands for the gradient in the $y$ variable. Analogously, we define a Laplacian
$\Delta_xF(\gamma)$.

We now scale the dynamics as follows. For each $\eps>0$,
we denote
\begin{equation}\label{kdstswr} q_\eps(x,h_1,h_2):=\eps^{-2d-2}a(h_1/\eps)a(h_2/\eps)\big(b(x)+b(x+h_2-h_1)\big),\end{equation}
and let $L_\eps$ denote the corresponding  $L$ operator.


\begin{theorem}\label{ydsets} Assume that
 functions $a:\R^d\to [0,\infty]$ and $b:\R^d\to[0,\infty)$ satisfy the following conditions:

\rom{a)} $a(x)=\tilde a(|x|)$, where $\tilde a:[0,\infty)\to[0,\infty]$ is a measurable function, and  $b$ is an even function;

\rom{b)} $a,b\in L^1(\mathbb R^d,dx)$;

\rom{c)} The function $a$ has a compact support;

\rom{d)} $b\in C_{\mathrm{b}}^1(\R^d)$, where $C_{\mathrm{b}}^1(\R^d)$ denotes the space of all bounded, continuously differentiable functions on $\R^d$ whose gradient is bounded;

\rom{e)} There exists $R>0$ such that
$$\int_{\R^d}dx\,\sup_{y\in B(x,R)}|\nabla b(y)|<\infty.$$
Here $B(x,R)$ denotes the closed ball in $\R^d$ centered at $x$ and of radius $R$.

Then, for each $F\in\FCC$,
$$ \text{$L_\eps F\to L_0F$ in $L^2(\Gamma,\pi_z)$  as $\eps\to0$.}$$
Here
\begin{equation}\label{jkfde6u} (L_0F)(\gamma):=c \sum_{x\in\gamma}\left[\Delta_x F(\gamma)
\sum_{y\in\gamma\setminus \{x\}}b(x-y)+\left\la\nabla_x F(\gamma),\sum_{y\in\gamma\setminus\{x\}}\nabla b(x-y)\right\ra\right],\end{equation}
where
\begin{equation}\label{trse} c:=\int_{\R^d}a(h)(h^1)^2\,dh,\end{equation}
$h^i$ denoting the $i$-th coordinate of $h\in\R^d$, $i=1,\dots,d$, and $\la\cdot,\cdot\ra$ stands for the scalar product in $\R^d$.
\end{theorem}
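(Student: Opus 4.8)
The plan is to pass to rescaled jump variables $h_i=\eps k_i$, Taylor-expand the increments of $F$ and of $b$, observe that the \emph{a priori} singular contribution of order $\eps^{-1}$ vanishes identically because $a$ is rotation invariant, let $\eps\to0$ in the remaining $O(1)$ terms to obtain a pointwise limit on $\Gamma$, and finally upgrade this to convergence in $L^2(\Gamma,\pi_z)$ by means of the second-moment formula \eqref{fdrssgdtd}. (That $L_\eps F\in L^2(\Gamma,\pi_z)$ for each $\eps>0$ follows from Proposition~\ref{dersres} and Lemma~\ref{fdrytrs}.) After the substitution $h_i=\eps k_i$, using \eqref{kdstswr},
\begin{multline*}
(L_\eps F)(\gamma)=\eps^{-2}\sum_{\{x_1,x_2\}\subset\gamma}\int_{(\R^d)^2}dk_1\,dk_2\,a(k_1)a(k_2)\big(b(x_2-x_1)+b(x_2-x_1+\eps(k_2-k_1))\big)\\
\times\big(F(\gamma\setminus\{x_1,x_2\}\cup\{x_1+\eps k_1,x_2+\eps k_2\})-F(\gamma)\big).
\end{multline*}

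For $F\in\FCC$ and fixed $\gamma,x_1,x_2$, the map $(y_1,y_2)\mapsto F(\gamma\setminus\{x_1,x_2\}\cup\{y_1,y_2\})$ is smooth with bounded derivatives supported (in each variable) in a fixed bounded set $\Lambda$, so Taylor's formula with integral remainder gives $F(\gamma\setminus\{x_1,x_2\}\cup\{x_1+\eps k_1,x_2+\eps k_2\})-F(\gamma)=\eps D_1+\eps^2 D_{2,\eps}$, with $D_1:=\la\nabla_{x_1}F(\gamma),k_1\ra+\la\nabla_{x_2}F(\gamma),k_2\ra$ and $D_{2,\eps}\to D_{2,0}:=\tfrac12\big(\la k_1,\nabla^2_{x_1}F(\gamma)k_1\ra+2\la k_1,\nabla_{x_1}\nabla_{x_2}F(\gamma)k_2\ra+\la k_2,\nabla^2_{x_2}F(\gamma)k_2\ra\big)$ as $\eps\to0$; likewise, since $b\in C^1_{\mathrm b}(\R^d)$ (condition (d)), $b(x_2-x_1+\eps(k_2-k_1))-b(x_2-x_1)=\eps\rho_\eps$ with $\rho_\eps:=\int_0^1\la\nabla b(x_2-x_1+s\eps(k_2-k_1)),k_2-k_1\ra\,ds\to\la\nabla b(x_2-x_1),k_2-k_1\ra=:\rho_0$. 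Thus the integrand of $(L_\eps F)(\gamma)$ equals $a(k_1)a(k_2)\big(\eps^{-1}2b(x_2-x_1)D_1+2b(x_2-x_1)D_{2,\eps}+\rho_\eps D_1+\eps\rho_\eps D_{2,\eps}\big)$. The singular term drops out \emph{exactly} after integrating in $k$: since $a(k)=\tilde a(|k|)$ one has $\int_{\R^d}a(k)k\,dk=0$ (the integral being finite because $a\in L^1(\R^d)$ has compact support), hence $\int\!\int a(k_1)a(k_2)D_1\,dk_1\,dk_2=0$. In the remaining terms, for each fixed $\gamma$ the pair sum is finite and one interchanges limit and $k$-integration by dominated convergence (compact support of $a$; boundedness and continuity of $\nabla b$ and of the second derivatives of $F$), to get
$$(L_\eps F)(\gamma)\to\sum_{\{x_1,x_2\}\subset\gamma}\int\!\int a(k_1)a(k_2)\big(2b(x_2-x_1)D_{2,0}+\rho_0 D_1\big)\,dk_1\,dk_2.$$
Since $a$ is rotation invariant, $\int a(k)k^ik^j\,dk$ is proportional to $\delta_{ij}$ (and $\int a(k)k\,dk=0$), so the mixed-Hessian and cross terms integrate away and the pair contribution becomes proportional to $b(x_2-x_1)(\Delta_{x_1}F(\gamma)+\Delta_{x_2}F(\gamma))+\la\nabla b(x_2-x_1),\nabla_{x_2}F(\gamma)-\nabla_{x_1}F(\gamma)\ra$; using that $b$ is even (hence $\nabla b$ odd), writing $\sum_{\{x_1,x_2\}}=\tfrac12\sum_{x_1\ne x_2}$ and reindexing the ordered double sum turns this into the right-hand side of \eqref{jkfde6u}, the constant $c$ of \eqref{trse} being the relevant second moment of $a$.

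The hard part will be the passage to $L^2(\Gamma,\pi_z)$. Write $L_\eps F-L_0F=\sum_{\{x_1,x_2\}\subset\gamma}g_\eps(x_1,x_2)$; one seeks to dominate $|g_\eps(x_1,x_2)|$, uniformly for small $\eps$, by
$$\bar g(x_1,x_2):=C\big(\mathbf 1_{\Lambda'}(x_1)+\mathbf 1_{\Lambda'}(x_2)\big)\Big(|b(x_2-x_1)|+\sup_{y\in B(x_2-x_1,R)}|\nabla b(y)|\Big),$$
where $\Lambda'$ is a fixed bounded set containing an $\eps$-neighbourhood of $\Lambda$ (available because $a$ is compactly supported, so the shifts $\eps k_i$ stay bounded) and $R$ is as in condition (e): the indicator factor records the support of the derivatives of $F$ (cf.\ the increment estimate \eqref{dse5sw}), the $|b|$-term comes from the contribution carrying $D_{2,\eps}$ (with $b$ bounded by condition (d)), and the $\sup$-term from the contribution carrying $\rho_\eps$ (for $\eps$ small the argument of $\nabla b$ lies in $B(x_2-x_1,R)$). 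By conditions (b), (d), (e) both $|b|$ and $u\mapsto\sup_{B(u,R)}|\nabla b|$ lie in $L^1(\R^d)\cap L^\infty(\R^d)\subset L^2(\R^d)$, whence a convolution-type estimate (using that $\mathbf 1_{\Lambda'}$ is bounded with bounded support) shows that all three terms on the right-hand side of \eqref{fdrssgdtd} are finite for $f=\bar g$. Since moreover $g_\eps(x_1,x_2)\to0$ pointwise, applying \eqref{fdrssgdtd} to $f=|g_\eps|$ and then invoking dominated convergence yields $\|L_\eps F-L_0F\|_{L^2(\Gamma,\pi_z)}\to0$ (and, incidentally, $L_0F\in L^2(\Gamma,\pi_z)$). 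I expect the only genuinely delicate point to be the bookkeeping in this last step, in particular verifying finiteness of the triple-integral term in \eqref{fdrssgdtd} with $f=\bar g$.
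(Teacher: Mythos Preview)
Your approach is correct and essentially identical to the paper's: rescale, Taylor-expand both $F$ and $b$, kill the $\eps^{-1}$ term via the vanishing odd moments of the radial function $a$, and then pass to the $L^2$ limit by dominating each pair contribution by a function of the form $C(\mathbf 1_{\Lambda'}(x_1)+\mathbf 1_{\Lambda'}(x_2))\big(b(x_2-x_1)+\sup_{B(x_2-x_1,R)}|\nabla b|\big)$ and applying \eqref{fdrssgdtd} together with dominated convergence. One minor slip: the claim that ``for each fixed $\gamma$ the pair sum is finite'' is false (one of $x_1,x_2$ can range over all of $\gamma$), but this is harmless since your $L^2$ argument via \eqref{fdrssgdtd}---which is the actual content of the proof, as in the paper---does not rely on it.
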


\begin{remark}
As will be seen from the proof of Theorem \ref{ydsets}, all the series on the right hand side of formula \eqref{jkfde6u} converge absolutely for $\pi_z$-a.a.\ $\gamma\in\Gamma$.
\end{remark}

\begin{remark}
For each $\gamma\in\Gamma$ and $x\in\gamma$, denote $A_x(\gamma):=c\sum_{y\in\gamma\setminus\{x\}}b(x-y)$. Also let $T\Gamma_\gamma:=L^2(\R^d\to\R^d,d\gamma)$ be a tangent space to $\Gamma$ at $\gamma$ (cf.\ \cite{AKR}). Then $\nabla F(\gamma)=(\nabla_x F(\gamma))_{x\in\gamma}\in T\Gamma_\gamma$ and set $B(\gamma):=\left(\sum_{y\in\gamma\setminus\{x\}}c\nabla b(x-y)\right)_{x\in\gamma}$. Then formula \eqref{jkfde6u} can be written in the form
$$ (L_0F)(\gamma)=\sum_{x\in\gamma}A_x(\gamma)\Delta_x F(\gamma)+\big\langle \nabla F(\gamma), B(\gamma)\big\rangle_{T\Gamma_\gamma}\,.$$
\end{remark}

\begin{remark} Note that condition e) of Theorem \ref{ydsets} is slightly stronger than the condition $|\nabla b|\in L^1(\R^d,dx)$.
\end{remark}

\begin{remark}
Recall that the generator of the gradient stochastic dynamics has the form
$$ (L'_0F)(\gamma):=\frac12 \sum_{x\in\gamma}\left[\Delta_x F(\gamma)
-\beta\left\la\nabla_x F(\gamma),\sum_{y\in\gamma\setminus\{x\}}\nabla \phi(x-y)\right\ra\right],$$ where $\beta$ is the inverse temperature and $\phi$ is the potential of pair interaction, see   \cite{AKR,Osada,Spohn,Yoshida} for further details. The difference between the generators $L_0$ and $L_0'$ is in the non-trivial  coefficient $A_x(\gamma)$ by $\Delta_xF(\gamma)$ in the operator $L_0$. This coefficient allows $L_0$ to be symmetric with respect to the Poisson measure, while $L_0'$ is symmetric with respect to the Gibbs measure corresponding to the inverse temperature $\beta$ and the potential of pair interaction $\phi$.
\end{remark}

\begin{proof}[Proof of Theorem \ref{ydsets}] Denote by $\ddot\Gamma$ the space of all multiple configurations in $\R^d$, i.e., $\ddot\Gamma$ consists of all $\{0,1,2,3,\dots,\infty\}$-valued Radon measures on $(\R^d,\mathcal B(\R^d))$, this space being also equipped with the vague topology. Evidently $\Gamma\subset\ddot\Gamma$. For any  $\gamma\in\ddot\Gamma$ and $f\in C_0(\R^d)$, we set $\la f,\gamma\ra:=\int_{\R^d}f(x)\,\gamma(dx)$. Hence, we can extend each function $F\in \FCC$ to $\ddot\Gamma$ by using  formula \eqref{drtses}.
(As easily seen, such an extension does not depend on the choice of the function's representation in the form \eqref{drtses}.)
  For each $\gamma\in\Gamma$ and  $x\in\gamma$, the function
$$\R^d\ni y\mapsto u(y):=F(\gamma-\delta_x+\delta_y)\in\R$$
is clearly smooth. Note that, while $\gamma-\delta_x\in\Gamma$, the measure $\gamma-\delta_x+\delta_y$ belongs to $\ddot\Gamma$ and not necessarily to $\Gamma$. For a fixed $y\in\R^d$, denote $\tilde\gamma:=\gamma-\delta_x+\delta_y$ and set $\nabla_x F(\tilde\gamma):=\nabla u(y)$. In the case where $y=x$ and so $\tilde\gamma=\gamma$, the just given definition of $\nabla_x F(\gamma)$ coincides with \eqref{fdrtdtr}.

By \eqref{tsres} and \eqref{kdstswr}, for $F\in \FCC$,
\begin{align}
&(L_\eps F)(\gamma)\notag\\
&=\sum_{\{x_1,\,x_2\}\subset\gamma}\eps^{-2}\int_{\R^d}dh_1\int_{\R^d}dh_2
\,a(h_1)a(h_2)\big(b(x_2-x_1)+b(x_2-x_1+\eps(h_2-h_1))\big)\notag\\
&\qquad\times \big(F(\gamma\setminus\{x_1,x_2\}\cup\{x_1+\eps h_1,x_2+\eps h_2\})-F(\gamma)\big)\notag\\
&=\sum_{\{x_1,\,x_2\}\subset\gamma}\eps^{-2}\int_{\R^d}dh_1\int_{\R^d}dh_2
\,a(h_1)a(h_2)\big(b(x_2-x_1)+b(x_2-x_1+\eps(h_2-h_1))\big)\notag\\
&\qquad\times \big(F(\gamma-\delta_{x_1}-\delta_{x_2}+\delta_{x_1+\eps h_1}+\delta_{x_2+\eps h_2})-F(\gamma)\big).\label{d5w34}
\end{align}
We have used the fact that, for any $\{x_1,x_2\}\subset\gamma$ and a.a.\ $(h_1,h_2)\in(\R^d)^2$, we have  $$\{x_1+\eps h_1,x_2+\eps h_2\}\cap\gamma=\varnothing.$$

For any $x,y\in\R^N$, $x\ne y$, $N\in\mathbb N$, denote by $[x,y]$ the line segment connecting points $x$ and $y$. By \eqref{d5w34}, condition d), and Taylor's formula, we get
\begin{multline}
(L_\eps F)(\gamma)=\sum_{\{x_1,\,x_2\}\subset\gamma}\eps^{-2}\int_{\R^d}dh_1\int_{\R^d}dh_2
\,a(h_1)a(h_2)\\
\times\big[
2b(x_2-x_1)+\la \nabla b(\tilde y(x_2-x_1,\eps(h_2-h_1))),\eps(h_2-h_1)\ra
\big]\\
\times\big[
\la \nabla_{(x_1,x_2)}F(\gamma),(\eps h_1,\eps h_2)\ra+(1/2)\la \nabla_{(x_1,x_2)}^2F(\tilde\gamma(\gamma,x_1,x_2,\eps h_1,\eps h_2)),(\eps h_1,\eps h_2)^{\otimes 2}\ra
\big].\label{dtsdehfy}
\end{multline}
Here $\nabla_{(x_1,x_2)}F(\gamma)$
 is defined analogously to $\nabla_x F(\gamma)$,
 \begin{equation}\label{hufdd}\tilde y(x_2-x_1,\eps(h_2-h_1)) \in [x_2-x_1,x_2-x_1+\eps(h_2-h_1)]\end{equation}
 and
 $$ \tilde\gamma(\gamma,x_1,x_2,\eps h_1,\eps h_2)=\gamma-\delta_{x_1}-\delta_{x_2}+\delta_{y_1}+\delta_{y_2}
 $$
 with
 $$(y_1,y_2)\in[(x_1,x_2),(x_1+\eps h_1,x_2+\eps  h_2)]. $$

 By condition a),
 \begin{equation}\label{tdtrs21} \int_{\R^d}dh\, a(h)h^i=0, \quad i=1,\dots,d.\end{equation}
 Hence, for any $\{x_1,\,x_2\}\subset\gamma$,
 \begin{align}
 &\eps^{-2}\int_{\R^d}dh_1\int_{\R^d}dh_2
\,a(h_1)a(h_2)2b(x_2-x_1) \la\nabla_{(x_1,x_2)}F(\gamma),(\eps h_1,\eps h_2)\ra\notag\\
&\quad=\eps^{-1}
2b(x_2-x_1)\left[\left\la \nabla_{x_1} F(\gamma),\int_{\R^d}dh_1\,a(h_1) h_1\int_{\R^d}dh_2\,
a(h_2) \right\ra\right.\notag\\
&\qquad\left.\text{}+\left\la\nabla_{x_2} F(\gamma),\int_{\R^d}dh_1\, a(h_1)\int_{\R^d}dh_2
\,a(h_2) h_2\right\ra\right]=0.\label{dseserjg}
 \end{align}

By condition a),
 \begin{equation}\label{tdtersare1} \int_{\R^d}dh\, a(h)h^ih^j=0, \quad i,j=1,\dots,d,\ i\ne j.\end{equation}
Now, by d), \eqref{trse}, \eqref{hufdd}, \eqref{tdtrs21},  \eqref{tdtersare1}, and the dominated convergence theorem, we get, as  $\eps\to0$,
\begin{align}
 &\eps^{-2}\int_{\R^d}dh_1\int_{\R^d}dh_2
\,a(h_1)a(h_2) \la \nabla b(\tilde y(x_2-x_1,\eps(h_2-h_1))),\eps(h_2-h_1)\ra\notag \\
&\qquad\times \la \nabla_{(x_1,x_2)}F(\gamma),(\eps h_1,\eps h_2)\ra\notag\\
&\quad= \int_{\R^d}dh_1\int_{\R^d}dh_2
\,a(h_1)a(h_2) \la \nabla b(\tilde y(x_2-x_1,\eps(h_2-h_1))),(h_2-h_1)\ra\notag \\
&\qquad\times \la \nabla_{(x_1,x_2)}F(\gamma),( h_1, h_2)\ra\notag\\
&\quad\to \int_{\R^d}dh_1\int_{\R^d}dh_2
\,a(h_1)a(h_2) \la \nabla b(x_2-x_1),(h_2-h_1)\ra \la \nabla_{(x_1,x_2)}F(\gamma),( h_1, h_2)\ra\notag\\
&\quad=\int_{\R^d}dh_1\int_{\R^d}dh_2
\,a(h_1)a(h_2)\sum_{i=1}^d\frac{\di}{\di y_i}b(y)\Big|_{y=x_2-x_1}(h_2^i-h_1^i)\notag\\
&\qquad\times\sum_{j=1}^d
\left(\frac{\di}{\di x_1^j}F(\gamma)h_1^j+\frac{\di}{\di x_2^j}F(\gamma)h_2^j\right)\notag\\
&\quad=c\big(\la \nabla_{x_1}F(\gamma),-\nabla b(x_2-x_1)\ra+\la \nabla_{x_2}F(\gamma),\nabla b(x_2-x_1)\ra\big)\notag\\
&\quad=c\big(\la \nabla_{x_1}F(\gamma),\nabla b(x_1-x_2)\ra+\la \nabla_{x_2}F(\gamma),\nabla b(x_2-x_1)\ra\big).\notag 
\end{align}
(We have used obvious notation.) Let $R>0$ be as in condition e).
Further, let $r>0$ be such that the function $a(h)$ vanishes outside the ball $B(0,r)$ and $\nabla_x F(\gamma)=0$ for all $\gamma\in \Gamma$ and $x\in\gamma$, $x\not\in B(0,r)$. Then,
for all $\eps<R/(2r)$,
\begin{align}
&\int_{\R^d}dh_1\int_{\R^d}dh_2
\,a(h_1)a(h_2) \big|\la \nabla b(\tilde y(x_2-x_1,\eps(h_2-h_1))),(h_2-h_1)\ra\big| \notag\\
&\qquad\times
\big|\la \nabla_{(x_1,x_2)}F(\gamma),( h_1, h_2)\ra\big|\notag\\
&\quad\le C_3 \int_{B(0,r)}dh_1 \int_{B(0,r)}dh_2(\mathbf 1_{B(0,r)}(x_1)+\mathbf 1_{B(0,r)}(x_2))
\sup_{y\in B(x_2-x_1,R)}|\nabla b(y)|(|h_1|^2+|h_2|^2)\notag\\
&\quad= C_4 (\mathbf 1_{B(0,r)}(x_1)+\mathbf 1_{B(0,r)}(x_2)) \sup_{y\in B(x_2-x_1,R)}|\nabla b(y)|. \label{yure565}
\end{align}
By \eqref{fdrssgdtd}, \eqref{yure565}, d) and e),
$$\sum_{\{x_1,\,x_2\}\subset\gamma} (\mathbf 1_{B(0,r)}(x_1)+\mathbf 1_{B(0,r)}(x_2)) \sup_{y\in B(x_2-x_1,R)}|\nabla b(y)| \in L^2(\Gamma,\pi_z).$$
Therefore, by the dominated convergence theorem,
\begin{align}
 &\sum_{\{x_1,x_2\} \subset\gamma}
 \eps^{-2}\int_{\R^d}dh_1\int_{\R^d}dh_2
\,a(h_1)a(h_2) \la \nabla b(\tilde y(x_2-x_1,\eps(h_2-h_1))),\eps(h_2-h_1)\ra\notag \\
&\qquad\times \la \nabla_{(x_1,x_2)}F(\gamma),(\eps h_1,\eps h_2)\ra\notag\\
&\quad\to \sum_{\{x_1,x_2\} \subset\gamma}c\big(\la \nabla_{x_1}F(\gamma),\nabla b(x_1-x_2)\ra+\la \nabla_{x_2}F(\gamma),\nabla b(x_2-x_1)\ra\big)\notag\\
&\quad= c\sum_{x\in\gamma}\left\la\nabla_x F(\gamma),\sum_{y\in\gamma\setminus\{x\}}b(x-y)\right\ra\label{cersr}
\end{align}
in $L^2(\Gamma,\pi_z)$ as $\eps\to0$.

Analogously,
\begin{align}
&\eps^{-2}\int_{\R^d}dh_1\int_{\R^d}dh_2
\,a(h_1)a(h_2) 2b(x_2-x_1)\notag\\
&\qquad\times (1/2)\la \nabla_{(x_1,x_2)}^2F(\tilde\gamma(\gamma,x_1,x_2,\eps h_1,\eps h_2)),(\eps h_1,\eps h_2)^{\otimes 2}\ra\notag\\
&\quad=\int_{\R^d}dh_1\int_{\R^d}dh_2
\,a(h_1)a(h_2) b(x_2-x_1)
\la \nabla_{(x_1,x_2)}^2F(\tilde\gamma(\gamma,x_1,x_2,\eps h_1,\eps h_2)),( h_1, h_2)^{\otimes 2}\ra\notag\\
&\quad\to c b(x_2-x_1)(\Delta_{x_1}F(\gamma)+\Delta_{x_2}F(\gamma))\notag
\end{align}
as $\eps\to 0$, and for $0<\eps\le 1$,
\begin{multline*}
\int_{\R^d}dh_1\int_{\R^d}dh_2
\,a(h_1)a(h_2) b(x_2-x_1)
\la \nabla_{(x_1,x_2)}^2F(\tilde\gamma(\gamma,x_1,x_2,\eps h_1,\eps h_2)),( h_1, h_2)^{\otimes 2}\ra\\
\le C_5 \big(\mathbf 1_{B(0,2r)}(x_1)+\mathbf 1_{B(0,2r)}(x_2)\big)b(x_2-x_1).
\end{multline*}
From here, by the dominated convergence,
\begin{align}
&\sum_{\{x_1,x_2\} \subset\gamma} \eps^{-2}\int_{\R^d}dh_1\int_{\R^d}dh_2
\,a(h_1)a(h_2) 2b(x_2-x_1)\notag\\
&\qquad\times (1/2)\la \nabla_{(x_1,x_2)}^2F(\tilde\gamma(\gamma,x_1,x_2,\eps h_1,\eps h_2)),(\eps h_1,\eps h_2)^{\otimes 2}\ra\notag\\
&\quad\to c\sum_{\{x_1,x_2\} \subset\gamma} b(x_2-x_1)(\Delta_{x_1}F(\gamma)+\Delta_{x_2}F(\gamma))\notag\\
&\quad=\sum_{x\in\gamma}\Delta_xF(\gamma)\sum_{y\in \gamma\setminus\{x\}}b(x-y).\label{frtst5}
\end{align}
in $L^2(\Gamma,\pi_z)$ as $\eps\to0$.

Finally, we easily conclude that
\begin{align}
&\sum_{\{x_1,x_2\} \subset\gamma} \eps^{-2}\int_{\R^d}dh_1\int_{\R^d}dh_2
\,a(h_1)a(h_2)\la \nabla b(\tilde y(x_2-x_1,\eps(h_2-h_1))),\eps(h_2-h_1)\ra\notag\\
&\qquad\times (1/2)\la \nabla_{(x_1,x_2)}^2F(\tilde\gamma(\gamma,x_1,x_2,\eps h_1,\eps h_2)),(\eps h_1,\eps h_2)^{\otimes 2}\ra\notag\\
&\quad =\eps \sum_{\{x_1,x_2\} \subset\gamma} \int_{\R^d}dh_1\int_{\R^d}dh_2
\,a(h_1)a(h_2)\la \nabla b(\tilde y(x_2-x_1,\eps(h_2-h_1))),h_2-h_1\ra\notag\\
&\qquad\times (1/2)\la \nabla_{(x_1,x_2)}^2F(\tilde\gamma(\gamma,x_1,x_2,\eps h_1,\eps h_2)),( h_1, h_2)^{\otimes 2}\ra\to 0\label{eopigf}
\end{align}
in $L^2(\Gamma,\pi_z)$ as $\eps\to0$.
Now, the statement of the theorem follows from \eqref{dtsdehfy}, \eqref{dseserjg},
\eqref{cersr}--\eqref{eopigf}.
\end{proof}

We will now show that the operator $(L_0,\FCC)$ in $L^2(\Gamma,\pi_z)$ from Theorem~\ref{ydsets} is a pre-generator of a diffusion dynamics. (The reader is advised to compare the following proposition with the paper \cite{AR}, which was the first rigorous result on the construction of an infinite dimensional diffusion through a Dirichlet form.)

\begin{proposition}\label{tesdl}  Let the conditions of Theorem~\rom{\ref{ydsets}} be satisfied.

\rom{i)} Define a quadratic form
$$ \mathcal E_0(F,G):=\int_\Gamma (-L_0F)(\gamma)G(\gamma)\pi_z(d\gamma),\quad F,G\in \FCC.$$
Then, for all $F,G\in\FCC$,
 \begin{align}\mathcal E_0(F,G)&=c\int_{\Gamma}\pi_z(d\gamma)\sum_{\{x_1,\,x_2\}\subset\gamma}b(x_1-x_2)
 \big(\la \nabla_{x_1}F(\gamma),\nabla_{x_1}G(\gamma)\ra+\la \nabla_{x_2}F(\gamma),\nabla_{x_2}G(\gamma)\ra\big)\notag\\
 &=c\int_{\Gamma}\pi_z(d\gamma)\sum_{x\in\gamma}\la \nabla_x F(\gamma),\nabla_x G(\gamma)\ra\sum_{y\in\gamma\setminus\{x\}}b(x-y) .\label{gzr}\end{align}
Hence, the quadratic form $(\mathcal E_0,\FCC)$ is symmetric and closable in $L^{2}(\Gamma,
\pi_z)$, and its closure will be denoted by $(\mathcal{E}_0,
D(\mathcal{E}_0))$.

\rom{ii)}  For $d\ge 2$, there exists a conservative diffusion process
\begin{align*}
M=\left(\Omega^0,
\mathcal{F}^0,(\mathcal{F}^0_{t})_{t\geq
0},(\Theta^0_{t})_{t\geq 0}, (X^0(t))_{t\geq 0},
(P^0_{\gamma})_{\gamma\in \Gamma}\right)
\end{align*}
on $\Gamma$ which is properly associated with
$(\mathcal{E}_0, D(\mathcal{E}_0))$, i.e., for each
($\pi_z$-version of) $F\in L^{2}(\Gamma, \pi_z)$ and $t>0$ $$\Gamma\ni
\gamma\mapsto
(p^0_{t}F)(\gamma):=\int_{\Omega}F(X^0(t))\, dP^0_{\gamma}$$
is an $\mathcal{E}_0$-quasi continuous version of
$\exp(tL_0)F$. Here  $(-L_0, D(L_0))$ is the
generator of the quadratic form $(\mathcal{E}_0, D(\mathcal{E}_0))$---the Friedrichs extension of the operator \linebreak $(-L_0,\FCC)$.

\rom{iii)} If $d=1$, then the result of\/ \rom{ii)} remains true if we replace $\Gamma$ with the bigger space $\ddot\Gamma$ of all multiple configurations in $\mathbb R^d$.
\end{proposition}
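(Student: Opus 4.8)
\textbf{Proof proposal for Proposition \ref{tesdl}.}

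The plan is to prove the three parts in order, with the bulk of the work being the integration-by-parts computation in (i). First I would establish formula \eqref{gzr}. Starting from $\mathcal E_0(F,G)=\int_\Gamma(-L_0F)(\gamma)G(\gamma)\,\pi_z(d\gamma)$ with $L_0$ given by \eqref{jkfde6u}, I would rewrite the sum over $x\in\gamma$ and $y\in\gamma\setminus\{x\}$ as a sum over ordered pairs, then apply the Mecke identity \eqref{Mecke} twice to turn it into an integral over $\gamma\in\Gamma$ and two free variables $x_1,x_2$ integrated against $z\,dx_1\,z\,dx_2$. On each such term one has an expression of the form $\int_{\R^d} z\,dx\,\big(A_x\Delta_x u(x)+\la\nabla_x u(x),B_x\ra\big)v(x)$ with $u(y):=F(\gamma\cup\{y\})$, $v(y):=G(\gamma\cup\{y\})$ (in the notation of the proof of Theorem \ref{ydsets}, extending to $\ddot\Gamma$), and the coefficient $A_x=c\,b(x-\text{other point})$, $B_x=c\nabla b(x-\text{other point})$ satisfying exactly $\nabla_x A_x=B_x$. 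The key identity is the one-variable integration by parts
\begin{equation*}
\int_{\R^d} z\,dx\,\big(b(x)\Delta u(x)+\la\nabla b(x),\nabla u(x)\ra\big)v(x)=-\int_{\R^d}z\,dx\,b(x)\la\nabla u(x),\nabla v(x)\ra,
\end{equation*}
valid because $b\,\nabla u$ is compactly supported (recall $F\in\FCC$ has cylinder functions with $C_0^\infty$ test functions, so $u$ has compact-support gradient, and $b$ is bounded). Summing the contributions and symmetrizing the labels $x_1\leftrightarrow x_2$ yields the first line of \eqref{gzr}; reorganizing the ordered pair sum back into $\sum_{x\in\gamma}\sum_{y\in\gamma\setminus\{x\}}$ and using that $b$ is even gives the second line. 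Symmetry of $\mathcal E_0$ is then manifest, and nonnegativity is clear from the quadratic-form expression since $b\ge0$; closability follows because $(\mathcal E_0,\FCC)$ is the form of the nonnegative symmetric operator $(-L_0,\FCC)$ (or, alternatively, one checks it is a well-defined pre-Dirichlet form and invokes the standard criterion, cf.\ \cite{MR}).

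For part (ii) I would invoke the general machinery of \cite{MR}: having a closable, densely defined, nonnegative symmetric form, its closure $(\mathcal E_0,D(\mathcal E_0))$ is a symmetric Dirichlet form (the Markov/contraction property of the unit is checked on $\FCC$ using that $g_F\mapsto (0\vee g_F)\wedge1$ does not increase the carré-du-champ, exactly as for the gradient dynamics, cf.\ \cite{AKR}). To get an \emph{associated diffusion process} one needs the form to be local and quasi-regular. Locality is immediate from the expression \eqref{gzr}, which is a pure ``gradient''/carré-du-champ form with no jump or killing part. Quasi-regularity is the point where I expect the work to concentrate: I would follow the scheme of \cite{AKR} (and \cite{MR}), constructing a compact metrizable topology on $\Gamma$ finer than the vague one, exhibiting a countable family of $\mathcal E_0$-continuous functions separating points, and verifying the existence of an $\mathcal E_0$-nest of compacts. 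The only nontrivial ingredient beyond \cite{AKR} is that the form carries the extra weight $\sum_{y\in\gamma\setminus\{x\}}b(x-y)$ by $\la\nabla_x F,\nabla_x G\ra$; since $b$ is bounded and in $L^1$, this weight is $\pi_z$-a.e.\ finite and locally bounded, so the nest construction and the capacity estimates of \cite{AKR} go through with only notational changes. Then the quasi-regular local symmetric Dirichlet form yields, by \cite[Ch.\ V, IV]{MR}, a conservative diffusion $M$ properly associated with $(\mathcal E_0,D(\mathcal E_0))$, with $p_t^0F$ an $\mathcal E_0$-quasi-continuous version of $\exp(tL_0)F$; conservativity follows from $L_0\mathbf 1=0$ together with the fact that $\FCC\ni1$ and the form is conservative in the sense of \cite{MR}.

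For part (iii), the subtlety in dimension $d=1$ is that a single Brownian coordinate can hit a point of the rest of the configuration, so the process cannot be realized on $\Gamma$ but only on the space $\ddot\Gamma$ of multiple configurations: this is exactly the phenomenon already noted in the proof of Theorem \ref{ydsets}, where $\gamma-\delta_x+\delta_y$ naturally lives in $\ddot\Gamma$. I would therefore, for $d=1$, repeat the whole construction with $\Gamma$ replaced by $\ddot\Gamma$ throughout: functions in $\FCC$ extend canonically to $\ddot\Gamma$ (as observed in Section \ref{vcydryds}), $\pi_z$ is still a probability measure on $\ddot\Gamma$ carried by $\Gamma$, the Mecke identity and formula \eqref{gzr} are unchanged, and $\ddot\Gamma$ with the vague topology is again a Polish space for which the quasi-regularity verification of \cite{AKR} applies verbatim. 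The resulting diffusion lives on $\ddot\Gamma$. The main obstacle overall is the quasi-regularity/nest verification in the weighted setting; everything else is a routine adaptation of \cite{AKR,MR} combined with the integration by parts of part (i).
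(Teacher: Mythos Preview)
Your proposal is correct and follows essentially the same route as the paper: the integration-by-parts computation in (i) is the same identity (the paper runs it in the reverse direction, starting from the gradient form and recovering $-L_0$, but this is the same calculation), and closability via the symmetric nonnegative operator is exactly what the paper uses. For (ii) and (iii) the paper does not spell out the quasi-regularity argument but simply refers to Theorems~6.1 and~6.3 of \cite{KLRDiffusions} (and \cite{MR98,RS98}), which already treat weighted gradient Dirichlet forms on configuration spaces and thus absorb the $b$-weight issue you flagged; your sketch via \cite{AKR,MR} is the same machinery, though \cite{KLRDiffusions} is the more directly applicable reference.
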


\begin{proof}
Analogously to the proof of Theorem \ref{ydsets}, we easily see that the quadratic form $({\mathcal E}_0',\FCC)$ given by the right-hand side of formula \eqref{gzr} is well defined. By the Mecke identity \eqref{Mecke},
\begin{align*}&{\mathcal E}'_0(F,G)=c\int_\Gamma\pi_z(d\gamma)\int_{\R^d}z\,dx\int_{\R^d}z\,dy\, \la\nabla_x F(\gamma+\delta_x+\delta_y),\nabla_x G(\gamma+\delta_x+\delta_y)\ra b(x-y)\\
&\quad=c\int_\Gamma\pi_z(d\gamma)\int_{\R^d}z\,dx\int_{\R^d}z\,dy\,
\big(-\Delta_x F(\gamma+\delta_x+\delta_y) b(x-y)\\
&\qquad\text{}-\la\nabla_x F(\gamma+\delta_x+\delta_y),\nabla b(x-y)\ra\big)G(\gamma+\delta_x+\delta_y)\\
&\quad=c\int_\Gamma\pi_z(d\gamma)\sum_{x\in\gamma}\sum_{y\in\gamma\setminus\{x\}}\big(-\Delta_x F(\gamma) b(x-y)
-\la\nabla_x F(\gamma),\nabla b(x-y)\ra\big)G(\gamma)\\
&\quad =\int_\Gamma (-L_0F)(\gamma)G(\gamma)\pi_z(d\gamma)=\mathcal E_0(F,G),\quad F,G\in \FCC.
\end{align*}
Thus, $(-L_0,\FCC)$ is the generator of the quadratic symmetric form \linebreak $(\mathcal E_0,\FCC)$ in $L^2(\Gamma,\pi_z)$.
Hence, this form is closable in $L^2(\Gamma,\pi_z)$, and so statement i) is proven. Statements ii) and iii) can be shown analogously to Theorems~6.1 and~6.3 in  \cite{KLRDiffusions}, see also \cite{MR98} and \cite{RS98}.
\end{proof}

A result similar to Theorem~\ref{ydsets} and Proposition~\ref{tesdl} can be obtained for the stochastic dynamics from Proposition~\ref{cdsesers}, ii). Let us briefly outline it. The scaled
$q$ function is given by
$$q_\eps (x,h):=\eps^{-d-2}a(h/\eps)b(x-h),$$
and let $L_\eps$ denote the corresponding  $L$ operator.
Hence,
\begin{multline*} (L_\eps F)(\gamma)=\sum_{\{x_1,\,x_2\}\subset\gamma}\eps^{-2}\int_{\R^d}dh\, a(h)b(x_2-x_1-\eps h)\\
\times
\big(F(\gamma\setminus\{x_1,x_2\}\cup\{x_1+\eps h,x_2-\eps h\})-F(\gamma)\big).\end{multline*}
Under the conditions of Theorem~\ref{ydsets},
for each $F\in\FCC$, $L_\eps F\to L_0F$ in $L^2(\Gamma,\pi_z)$  as $\eps\to0$, where
\begin{align*}
(L_0F)(\gamma):&= c \sum_{x\in\gamma}\left[\frac12\,\Delta_x F(\gamma)
\sum_{y\in\gamma\setminus \{x\}}b(x-y)+\left\la\nabla_x F(\gamma),\sum_{y\in\gamma\setminus\{x\}}\nabla b(x-y)\right\ra\right]\\
&\quad-c\sum_{\{x_1,\,x_2\}\subset\gamma}
b(x_2-x_1)\sum_{i=1}^d \frac{\di}{\di x_1^i}\, \frac{\di}{\di x_2^i}\,F(\gamma),
\end{align*}
$c$ being given by \eqref{trse}. The corresponding Dirichlet form $\mathcal E_0$ has the following representation on $\FCC$:
$$\mathcal E_0(F,G)=\frac c2\int_\Gamma\pi_z(d\gamma)\sum_{\{x_1,\,x_2\}\subset\gamma}b(x_2-x_1)
\big\la (\nabla_{x_1}-\nabla_{x_2})F(\gamma),  (\nabla_{x_1}-\nabla_{x_2})G(\gamma)\big\ra . $$

\section{Convergence to a birth-and-death process in continuum}\label{ufdey76ersr}

We will now consider
another scaling limit of  the stochastic dynamics as in Proposition~\ref{dersres}, which will lead us to a birth-and-death process in continuum. So, we now scale the dynamics as follows. For any $\eps>0$, we denote
$$ q_\eps(x,h_1,h_2):=\eps^{2d}a(\eps h_1)a(\eps h_2)\big(b(x)+b(x+h_2-h_1)\big),$$
and let $L_\eps$ denote the corresponding $L$ generator.
Hence, for each $F\in\FC$,
\begin{multline}
(L_\eps F)(\gamma)=\sum_{\{x_1,\,x_2\}\subset\gamma}\int_{\R^d}dh_1\int_{\R^d}dh_2\, a(h_1)
a(h_2)\big(b(x_2-x_1)+b(x_2-x_1+(h_2-h_1)/\eps)\big)\\
\times \big(F(\gamma\setminus\{x_1,x_2\}\cup\{x_1+(h_1/\eps),x_2+(h_2/\eps)\})-F(\gamma)\big).\label{fdse5weyre}
\end{multline}
It is not hard to show by approximation that, for any $\varphi\in C_0(\R^d)$, we have $e^{\la\varphi,\cdot\ra}\in D(L_\eps)$ and the action of $L_\eps$ on $F=e^{\la\varphi,\cdot\ra}$ is given by \eqref{fdse5weyre} (compare with the beginning of the proof of Theorem \ref{huigfmkih}).

Below, for a function $f\in L^1(\R^d,dx)$, we denote $\la f\ra:=\int_{\R^d}f(x)\,dx.$

\begin{theorem}\label{ds5tey} Let the conditions of Proposition \rom{\ref{dersres}} be satisfied. Additionally assume that
\begin{equation}\label{tsrtf} b(x)\to 0\quad\text{as }|x|\to\infty. \end{equation}
Then, for each $\varphi\in C_0(\R^d)$,
$$ \text{$L_\eps F\to L_0F$ in $L^2(\Gamma,\pi_z)$  as $\eps\to0$},$$
 where $F=e^{\la\varphi,\cdot\ra}$ and
 \begin{align}
 &(L_0F)(\gamma):=\la a\ra^2\left[
 \sum_{\{x_1,\,x_2\}\subset\gamma}b(x_2-x_1)\big(F(\gamma\setminus\{x_1,x_2\})-F(\gamma)\big)\right.\notag\\
 &\qquad\text{}+\frac12
 \int_{\R^d}z\,dx_1\, \int_{\R^d}z\,dx_2 \, b(x_2-x_1)\big(F(\gamma\cup\{x_1,x_2\})-F(\gamma)\big)\notag\\
 &\qquad\left.+z\la b\ra \sum_{x\in\gamma}\big(F(\gamma\setminus\{x\})-F(\gamma)\big)+z\la b\ra \int_{\R^d}
z\, dx\big(F(\gamma\cup\{x\})-F(\gamma)\big)\right].\label{dreast}
 \end{align}
\end{theorem}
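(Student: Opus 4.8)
The plan is to compute $(L_\eps F)(\gamma)$ explicitly for $F=e^{\la\varphi,\cdot\ra}$ and pass to the limit $\eps\to0$ term by term. First I would record the elementary identity: for $F=e^{\la\varphi,\cdot\ra}$,
$$ F(\gamma\setminus\{x_1,x_2\}\cup\{y_1,y_2\})-F(\gamma)=F(\gamma)\big(e^{\varphi(y_1)+\varphi(y_2)-\varphi(x_1)-\varphi(x_2)}-1\big),$$
so that \eqref{fdse5weyre} becomes $F(\gamma)$ times a sum over $\{x_1,x_2\}\subset\gamma$ of an integral in $(h_1,h_2)$. In that integral I substitute $y_i=x_i+h_i/\eps$, i.e.\ $h_i=\eps(y_i-x_i)$, $dh_i=\eps^d\,dy_i$; the prefactor $\eps^{2d}a(\eps h_1)a(\eps h_2)$ combined with this change of variables is the key: $\eps^{2d}a(\eps h_i)\,dh_i$ becomes $\eps^{2d}a(\eps^2(y_i-x_i))\eps^d\,dy_i$ — wait, more carefully, keeping $h_i$ as the integration variable, the weight $a(h_i)\,dh_i$ is untouched and what degenerates is the argument $x_2-x_1+(h_2-h_1)/\eps$ of the second $b$, which runs off to infinity for $h_1\neq h_2$, while the points $x_i+h_i/\eps$ also run off to infinity. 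Using \eqref{tsrtf} and dominated convergence (the domination coming from $a\in L^1$, $b$ bounded, and $\varphi$ compactly supported so the exponential differences are bounded by a constant), the terms containing $b(x_2-x_1+(h_2-h_1)/\eps)$ and the terms with $\varphi(x_i+h_i/\eps)$ each split: the $b(\cdots)\to0$ unless $h_1=h_2$ (a null set, contributing nothing), so those pieces vanish, whereas $\varphi(x_i+h_i/\eps)\to 0$ pointwise (again $\varphi$ compact support), so $e^{\varphi(x_1+h_1/\eps)+\varphi(x_2+h_2/\eps)-\varphi(x_1)-\varphi(x_2)}\to e^{-\varphi(x_1)-\varphi(x_2)}$.

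Carrying this out, the four surviving contributions are exactly the four in \eqref{dreast}: from the $b(x_2-x_1)$ part of the weight one gets $\la a\ra^2 b(x_2-x_1)\big(e^{-\varphi(x_1)-\varphi(x_2)}-1\big)F(\gamma)=\la a\ra^2 b(x_2-x_1)\big(F(\gamma\setminus\{x_1,x_2\})-F(\gamma)\big)$ summed over pairs; the mixed cross-terms in expanding $e^{-\varphi(x_1)-\varphi(x_2)}$ via the Mecke identity \eqref{Mecke} (to turn sums over $\gamma$ into integrals against $z\,dx$, up to lower-order diagonal corrections as in \eqref{fdrssgdtd}) produce the birth term $\frac12\int z\,dx_1\int z\,dx_2\,b(x_2-x_1)\big(F(\gamma\cup\{x_1,x_2\})-F(\gamma)\big)$ and the two single-particle terms with coefficient $z\la b\ra$; the factor $z\la b\ra$ arises from $\int_{\R^d}b(x)\,z\,dx$ after integrating out one variable. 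More precisely I would organize the bookkeeping by first writing $e^{-\varphi(x_1)-\varphi(x_2)}-1=(e^{-\varphi(x_1)}-1)+(e^{-\varphi(x_2)}-1)+(e^{-\varphi(x_1)}-1)(e^{-\varphi(x_2)}-1)$ and handling each piece; the product piece, summed over unordered pairs and transformed by Mecke, yields the pair birth/death terms, while each linear piece, using translation invariance in $x_2$ and $\int b=\la b\ra$, yields the single-particle birth/death terms.

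The main technical point — and the step I expect to be the main obstacle — is the $L^2(\Gamma,\pi_z)$ (not merely pointwise) convergence, i.e.\ producing an $\eps$-uniform $L^2$ dominating function so that dominated convergence applies at the level of the stochastic integral / cylinder-function norm. Here I would use the standard moment formula \eqref{fdrssgdtd} for $\pi_z$: the quantity $\big|(L_\eps F)(\gamma)\big|/|F(\gamma)|$ is bounded by a constant times $\sum_{\{x_1,x_2\}\subset\gamma}\big(c_1(x_1-x_2)\mathbf 1_K(x_1)+c_1(x_1-x_2)\mathbf 1_K(x_2)\big)$ plus a similar expression with $b$, where $K\supset\operatorname{supp}\varphi$ is compact, $c_1\in L^1$, because the $\eps$-dependent arguments of $b$ only make things \emph{smaller} ($b$ bounded) and $\varphi$-differences are bounded by $2\|\varphi\|_\infty$; since $F(\gamma)=e^{\la\varphi,\gamma\ra}\in L^\infty$, and $\sum_{\{x_1,x_2\}\subset\gamma}g(x_1,x_2)\in L^2(\Gamma,\pi_z)$ whenever $g\ge0$ is symmetric with $g,\ g(\cdot,\cdot)\mathbf 1_K(\cdot)\in L^1\cap$ (the condition making \eqref{fdrssgdtd} finite), which holds by b) and $b\in L^\infty$. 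Hence the integrand is dominated in $L^2(\Gamma,\pi_z)$ uniformly in $\eps\le1$, the pointwise limit is \eqref{dreast}, and dominated convergence finishes the proof. The only care needed is that in the birth terms the Mecke-transformed integrands $\big|F(\gamma\cup\{x_1,x_2\})-F(\gamma)\big|=F(\gamma)\big|e^{\varphi(x_1)+\varphi(x_2)}-1\big|$ are again bounded by $F(\gamma)$ times a compactly supported $L^1\cap L^2$ function of $(x_1,x_2)$, so the limit object \eqref{dreast} is itself a genuine element of $L^2(\Gamma,\pi_z)$; this I would verify directly, again using \eqref{fdrssgdtd} and \eqref{Mecke}.
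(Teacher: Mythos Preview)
Your proposal contains a genuine gap: the convergence $L_\eps F\to L_0F$ is \emph{not} a pointwise-plus-dominated-convergence phenomenon, and attempting to force it into that mold loses three of the four terms in \eqref{dreast}.

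Concretely: write, as the paper does, $L_\eps F=L_0^{(1)}F+\sum_{i=2}^4 L_\eps^{(i)}F$, where $L_0^{(1)}F$ is the pair-death term (already $\eps$-independent) and the remaining three pieces come from splitting the weight $b(x_2-x_1)+b(x_2-x_1+(h_2-h_1)/\eps)$ and the difference $F(\gamma\setminus\{x_1,x_2\}\cup\{\dots\})-F(\gamma\setminus\{x_1,x_2\})+F(\gamma\setminus\{x_1,x_2\})-F(\gamma)$. For $i=2,3,4$, the functions $L_\eps^{(i)}F(\gamma)$ are sums over pairs in $\gamma$, while their $L^2$-limits $L_0^{(i)}F(\gamma)$ are $F(\gamma)$ times, respectively, a sum over \emph{single} points of $\gamma$, a deterministic constant, and a deterministic constant. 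There is no way a sum $\sum_{\{x_1,x_2\}\subset\gamma}$ can converge \emph{pointwise} in $\gamma$ to a deterministic integral $\int z\,dx_1\int z\,dx_2$; what actually happens is a self-averaging (law-of-large-numbers) effect, visible only after integrating over $\Gamma$. Your first paragraph in fact says the $b(\cdots+(h_2-h_1)/\eps)$ pieces ``vanish'' --- but those are exactly the pieces that produce the single-particle death term and the pair-birth term. Your attempt to recover the birth terms by ``turning sums over $\gamma$ into integrals against $z\,dx$ via Mecke'' conflates an identity of expectations with a pointwise identity on $\Gamma$; Mecke cannot be invoked at fixed $\gamma$.

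The domination argument is also broken on two counts. First, $F(\gamma)=e^{\la\varphi,\gamma\ra}$ is not in $L^\infty(\Gamma,\pi_z)$ unless $\varphi\le 0$. Second, the support constraint you write, $\mathbf 1_K(x_1)+\mathbf 1_K(x_2)$, is false uniformly in $\eps$: the factor $e^{\varphi(x_1+h_1/\eps)+\varphi(x_2+h_2/\eps)}-1$ is nonzero as soon as one of $x_i+h_i/\eps\in K$, which for small $\eps$ allows $x_i$ arbitrarily far from $K$. So no $\eps$-free $L^2$ majorant of the required form exists.

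The paper's argument avoids all of this by computing, for each $i=2,3,4$, the quantities $\int (L_\eps^{(i)}F)^2\,d\pi_z$ and $\int (L_\eps^{(i)}F)(L_0^{(i)}F)\,d\pi_z$ directly using the moment formula \eqref{ghdstre} (the weighted analogue of \eqref{fdrssgdtd}), and then passing to the limit inside those finite-dimensional integrals via changes of variable and dominated convergence in $(x_1,x_2,h_1,h_2,\dots)$. The splitting $e^{-\varphi(x_1)-\varphi(x_2)}-1=e^{-\varphi(x_1)}(e^{-\varphi(x_2)}-1)+(e^{-\varphi(x_1)}-1)$ you wrote down is indeed used there, but at the level of those integrals, not at fixed $\gamma$. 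Once $\|L_\eps^{(i)}F\|_2^2\to\|L_0^{(i)}F\|_2^2$ and $\langle L_\eps^{(i)}F,L_0^{(i)}F\rangle\to\|L_0^{(i)}F\|_2^2$ are established, $L^2$-convergence follows.
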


\begin{proof}
We represent $L_\eps F$ as follows:
$$ (L_\eps F)(\gamma)=(L^{(1)}_0 F)(\gamma)+\sum_{i=2}^4 (L^{(i)}_\eps F)(\gamma),$$
where
\begin{align*}
&(L^{(1)}_0 F)(\gamma):= \la a\ra^2\sum_{\{x_1,\,x_2\}\subset\gamma}b(x_2-x_1)\big(F(\gamma\setminus\{x_1,x_2\})-F(\gamma)\big),\\
&(L^{(2)}_\eps F)(\gamma):= \sum_{\{x_1,\,x_2\}\subset\gamma}\int_{\R^d}dh_1\int_{\R^d}dh_2\, a(h_1)a(h_2)
b(x_2-x_1+(h_2-h_1)/\eps))\\
&\qquad\times\big(F(\gamma\setminus\{x_1,x_2\})-F(\gamma)\big),\\
& (L^{(3)}_\eps F)(\gamma):=\sum_{\{x_1,\,x_2\}\subset\gamma} \int_{\R^d}dh_1\int_{\R^d}dh_2\, a(h_1)a(h_2) b(x_2-x_1)\\
&\qquad\times \big(
F(\gamma\setminus\{x_1,x_2\}\cup\{x_1+(h_1/\eps),\, x_2+(h_2/\eps)\})-F(\gamma\setminus\{x_1,x_2\})
\big),\\
& (L^{(4)}_\eps F)(\gamma):=\sum_{\{x_1,\,x_2\}\subset\gamma} \int_{\R^d}dh_1\int_{\R^d}dh_2\, a(h_1)a(h_2) b(x_2-x_1+(h_2-h_1)/\eps)\\
&\qquad\times \big(
F(\gamma\setminus\{x_1,x_2\}\cup\{x_1+(h_1/\eps),\, x_2+(h_2/\eps)\})-F(\gamma\setminus\{x_1,x_2\})
\big).
\end{align*}
The statement of the theorem will follow if we show that, for each
$F=e^{\la\varphi,\cdot\ra}$, $\varphi\in C_0(\R^d)$, and  $i=2,3,4$,
\begin{align} \int_\Gamma\pi_z(d\gamma)(L^{(i)}_\eps F)^2(\gamma)&\to
\int_\Gamma\pi_z(d\gamma)(L^{(i)}_0F)^2(\gamma )\quad\text{as $\eps\to0$},\label{jfdrstty}\\
 \int_\Gamma\pi_z(d\gamma)(L^{(i)}_\eps F)(\gamma)(L^{(i)}_0F)(\gamma )&\to
\int_\Gamma\pi_z(d\gamma)(L^{(i)}_0F)^2(\gamma )\quad\text{as $\eps\to0$},\label{ds5u}\end{align}
where
\begin{align*}
&(L_0^{(2)}F(\gamma):=\la a\ra^2z\la b\ra \sum_{x\in\gamma}\big(F(\gamma\setminus\{x\})-F(\gamma)\big),\\
&(L_0^{(3)}F)(\gamma):=\la a\ra^2z\la b\ra \int_{\R^d}
z\, dx\big(F(\gamma\cup\{x\})-F(\gamma)\big),\\
&(L_0^{(4)}F)(\gamma):=\la a\ra^2 \,\frac12
 \int_{\R^d}z\,dx_1\, \int_{\R^d}z\,dx_2 \, b(x_2-x_1)\big(F(\gamma\cup\{x_1,x_2\})-F(\gamma)\big).
\end{align*}
In fact, we have, for $\eps>0$,
\begin{align*}
&(L^{(2)}_\eps F)(\gamma):=e^{\la\varphi,\gamma\ra} \sum_{\{x_1,\,x_2\}\subset\gamma}\int_{\R^d}dh_1\int_{\R^d}dh_2\, a(h_1)a(h_2)
b(x_2-x_1+(h_2-h_1)/\eps))\\
&\qquad\times\big(e^{-\varphi(x_1)-\varphi(x_2)}-1\big),\\
& (L^{(3)}_\eps F)(\gamma):=e^{\la\varphi,\gamma\ra} \sum_{\{x_1,\,x_2\}\subset\gamma} \int_{\R^d}dh_1\int_{\R^d}dh_2\, a(h_1)a(h_2) b(x_2-x_1)\\
&\qquad\times e^{-\varphi(x_1)-\varphi(x_2)}\big(
e^{\varphi(x_1+(h_1/\eps))+\varphi(x_2+(h_2/\eps))}-1
\big),\\
& (L^{(4)}_\eps F)(\gamma):=e^{\la\varphi,\gamma\ra}\sum_{\{x_1,\,x_2\}\subset\gamma} \int_{\R^d}dh_1\int_{\R^d}dh_2\, a(h_1)a(h_2) b(x_2-x_1+(h_2-h_1)/\eps)\\
&\qquad\times  e^{-\varphi(x_1)-\varphi(x_2)}\big(
e^{\varphi(x_1+(h_1/\eps))+\varphi(x_2+(h_2/\eps))}-1
\big).
\end{align*}
and
\begin{align*}
&(L_0^{(2)}F)(\gamma):=\la a\ra^2z\la b\ra\, e^{\la\varphi,\gamma\ra} \sum_{x\in\gamma}(e^{-\varphi(x)}-1),\\
&(L_0^{(3)}F)(\gamma):=\la a\ra^2z\la b\ra
\, e^{\la\varphi,\gamma\ra}
\int_{\R^d}
z\, dx\,(e^{\varphi(x)}-1),\\
&(L_0^{(4)}F)(\gamma):=\frac12\,\la a\ra^2  e^{\la\varphi,\gamma\ra}
 \int_{\R^d}z\,dx_1\, \int_{\R^d}z\,dx_2 \, b(x_2-x_1)(e^{\varphi(x_1)+\varphi(x_2)}-1).
\end{align*}

Analogously to \eqref{fdrssgdtd}, we get
\begin{align}
&\int_\Gamma\pi_z(d\gamma)\left(e^{\la\varphi,\gamma\ra}\sum_{\{x_1,\,x_2\}\subset\gamma}f(x_1,x_2)
\right)^2\notag\\
&=\int_\Gamma\pi_z(d\gamma)e^{\la2\varphi,\gamma\ra}\left[\frac14\left(\int_{\R^d}z\,dx_1\int_{\R^d}z\,dx_2\, e^{2\varphi(x_1)+2\varphi(x_2)}f(x_1,x_2)\right)^2\right.\notag\\
&\quad\text{}+
\int_{\R^d}z\,dx_1\int_{\R^d}z\,dx_2\int_{\R^d}z\,dx_3\, e^{2\varphi(x_1)+2\varphi(x_2)+2\varphi(x_3)}f(x_1,x_2)f(x_2,x_3)\notag\\
&\quad\left. \text{}+\frac12\int_{\R^d}z\,dx_1\int_{\R^d}z\,dx_2\,
e^{2\varphi(x_1)+2\varphi(x_2)}
f(x_1,x_2)^2
\vphantom{\frac14\left(\int_{\R^d}z\,dx_1\int_{\R^d}z\,dx_2\, e^{2\varphi(x_1)+2\varphi(x_2)}f(x_1,x_2)\right)^2}
\right]
\label{ghdstre}\end{align}
for any measurable function $f:(\mathbb R^d)^2\to[0,\infty]$ and any $\varphi\in C_0(\R^d)$.

Let us show that \eqref{jfdrstty} holds for $i=2$. Since
$$ e^{-\varphi(x_1)-\varphi(x_2)}-1=e^{-\varphi(x_1)}(e^{-\varphi(x_2)}-1)+(e^{-\varphi(x_1)}-1),$$
by  the dominated convergence theorem, we have
\begin{align}&
\frac14\int_\Gamma \pi_z(d\gamma)e^{\la 2\varphi,\gamma\ra}\left(\int_{\R^d}z\,dx_1\int_{\R^d}z\,dx_2
\int_{\R^d}dh_1 \int_{\R^d}dh_2\, a(h_1)a(h_2)e^{2\varphi(x_1)+2\varphi(x_2)}\notag\right.\\
&\qquad\times\left.\vphantom{\int_{\R^d}}
b(x_2-x_1+(h_2-h_1)/\eps)(e^{-\varphi(x_1)-\varphi(x_2)}-1)\right)^2\notag\\
&\quad =\frac14\int_\Gamma \pi_z(d\gamma)e^{\la 2\varphi,\gamma\ra}\left(\int_{\R^d}z\,dx_1\int_{\R^d}z\,dx_2
\int_{\R^d}dh_1 \int_{\R^d}dh_2\, a(h_1)a(h_2)  b(x_2-x_1)\notag\right.\\
&\qquad\times e^{\varphi(x_1-(h_1/\eps)+(h_2/\eps))+2\varphi(x_2)}(e^{-\varphi(x_2)}-1)\notag\\
&\qquad\text{}+\int_{\R^d}z\,dx_1\int_{\R^d}z\,dx_2
\int_{\R^d}dh_1 \int_{\R^d}dh_2\, a(h_1)a(h_2)b(x_2-x_1) \notag\\
&\qquad\times \left.
e^{2\varphi(x_1)+2\varphi(x_2-(h_2/\eps)+(h_1/\eps))}
(e^{-\varphi(x_1)}-1)\vphantom{\int_{\R^d}}\right)^2\notag\\
&\quad\to \int_\Gamma\pi_z(d\gamma)e^{\la 2\varphi,\gamma\ra}\left(\la a\ra^2z\,\la b\ra\int_{\R^d}z\,dx\, e^{2\varphi(x)}(e^{-\varphi(x)}-1)\right)^2
\quad\text{as }\eps\to0.\label{dryde}
\end{align}
Analogously,
\begin{align}&
\int_\Gamma \pi_z(d\gamma)e^{\la 2\varphi,\gamma\ra}\int_{\R^d}z\,dx_1\int_{\R^d}z\,dx_2
\int_{\R^d}z\,dx_3\int_{\R^d}dh_1 \int_{\R^d}dh_2\int_{\R^d}dh'_1 \int_{\R^d}dh'_2\, \notag\\
&\qquad\times a(h_1)a(h_2)a(h'_1)a(h'_2) e^{2\varphi(x_1)+2\varphi(x_2)+2\varphi(x_3)}\notag\\
&\qquad\times b(x_2-x_1+(h_2-h_1)/\eps)
b(x_3-x_2+(h'_2-h'_1)/\eps)\notag\\
&\qquad\times\big(e^{-\varphi(x_1)}(e^{-\varphi(x_2)}-1)+ (e^{-\varphi(x_1)}-1)\big)\big(
e^{-\varphi(x_2)}(e^{-\varphi(x_3)}-1)+(e^{-\varphi(x_2)}-1)\big)\notag\\
&\quad\to \int_\Gamma \pi_z(d\gamma)e^{\la 2\varphi,\gamma\ra}\int_{\R^d}z\,dx_1\int_{\R^d}z\,dx_2
\int_{\R^d}z\,dx_3\int_{\R^d}dh_1 \int_{\R^d}dh_2\int_{\R^d}dh'_1 \int_{\R^d}dh'_2\, \notag\\
&\qquad\times a(h_1)a(h_2)a(h'_1)a(h'_2) e^{2\varphi(x_2)}b(x_2-x_1)b(x_3-x_2)(e^{-\varphi(x_2)}-1)^2\notag\\
&\quad=\la a\ra^4z^2\la b\ra^2\int_\Gamma \pi_z(d\gamma)e^{\la 2\varphi,\gamma\ra}\int_{\R^d}z\,dx\, e^{2\varphi(x)}(e^{-\varphi(x)}-1)^2,
\end{align}
and using additionally \eqref{tsrtf},
\begin{align}&
\frac12\int_\Gamma \pi_z(d\gamma)e^{\la 2\varphi,\gamma\ra}\int_{\R^d}z\,dx_1\int_{\R^d}z\,dx_2
\int_{\R^d}dh_1 \int_{\R^d}dh_2\int_{\R^d}dh'_1 \int_{\R^d}dh'_2\, \notag\\
&\qquad\times a(h_1)a(h_2)a(h'_1)a(h'_2) e^{2\varphi(x_1)+2\varphi(x_2)}\notag\\
&\qquad\times b(x_2-x_1+(h_2-h_1)/\eps))b(x_2-x_1+(h'_2-h_1')/\eps))\notag\\
&\qquad\times
\big[e^{-\varphi(x_1)}(e^{-\varphi(x_2)}-1)+(e^{-\varphi(x_1)}-1)\big]^2\to0\quad\text{as }\eps\to0.\label{ysa}
\end{align}
By \eqref{dryde}--\eqref{ysa},  formula \eqref{jfdrstty}  for $i=2$  follows.

Next, we show \eqref{jfdrstty}  for $i=3$. Analogously to the above, we get, as $\eps\to0$,
\begin{align}
&\frac14\int_\Gamma\pi_z(d\gamma)e^{\la 2\varphi,\gamma\ra}\left(\int_{\R^d}z\,dx_1
 \int_{\R^d}z\,dx_2 \int_{\R^d}dh_1 \int_{\R^d}dh_2\, a(h_1)a(h_2)b(x_2-x_1)\right.\notag\\
 &\qquad\left. \vphantom{\int_\Gamma \int_{\R^d}}\times e^{2\varphi(x_1)+2\varphi(x_2)-\varphi(x_1)-\varphi(x_2)}(e^{\varphi(x_1+(h_1/\eps))+\varphi(x_2+(h_2/\eps))}-1)\right)^2\notag\\
 &\quad=\frac14\int_\Gamma\pi_z(d\gamma)e^{\la 2\varphi,\gamma\ra}\left(\int_{\R^d}z\,dx_1
 \int_{\R^d}z\,dx_2 \int_{\R^d}dh_1 \int_{\R^d}dh_2\, a(h_1)a(h_2)\right.\notag\\
 &\qquad\left. \vphantom{\int_\Gamma \int_{\R^d}}\times b(x_2-(h_2/\eps)-x_1+(h_1/\eps)) e^{\varphi(x_1-(h_1/\eps))+\varphi(x_2-(h_2/\eps))}
 (e^{\varphi(x_1)+\varphi(x_2)}-1)\right)^2\notag\\
 &\quad\to\frac14\int_\Gamma\pi_z(d\gamma)e^{\la 2\varphi,\gamma\ra}\left(\int_{\R^d}z\,dx_1
 \int_{\R^d}z\,dx_2 \int_{\R^d}dh_1 \int_{\R^d}dh_2\, a(h_1)a(h_2)b(x_2-x_1)\right.\notag\\
 &\qquad\left. \vphantom{\int_\Gamma \int_{\R^d}}\times \big((e^{\varphi(x_1)}-1)+(e^{\varphi(x_2)}-1)\big)\right)^2\notag\\
 &\quad=\la a\ra^4 z^2\la b\ra^2\int_\Gamma\pi_z(d\gamma)e^{\la 2\varphi,\gamma\ra}\left(\int_{\R^d}dx\,(e^{\varphi(x)}-1)\right)^2,\label{fdsop}\\
  & \int_\Gamma\pi_z(d\gamma)e^{\la 2\varphi,\gamma\ra}
  \int_{\R^d}z\,dx_1
 \int_{\R^d}z\,dx_2 \int_{\R^d}z\,dx_3\int_{\R^d}dh_1 \int_{\R^d}dh_2 \int_{\R^d}dh'_1 \int_{\R^d}dh'_2\notag\\
 &\qquad\times a(h_1)a(h_2)a(h_1')a(h_2') b(x_2-x_1)b(x_3-x_2) e^{2\varphi(x_1)+2\varphi(x_2)+2\varphi(x_3)-\varphi(x_1)-2\varphi(x_2)-\varphi(x_3)}\notag\\
 &\qquad\times (e^{\varphi(x_1+(h_1/\eps))+\varphi(x_2+(h_2/\eps))}-1)(e^{\varphi(x_2+(h_1'/\eps))+\varphi(x_3+(h_2'/\eps))}-1)
 \notag\\
 & \quad=\int_\Gamma\pi_z(d\gamma)e^{\la 2\varphi,\gamma\ra}
  \int_{\R^d}z\,dx_1
 \int_{\R^d}z\,dx_2 \int_{\R^d}z\,dx_3\int_{\R^d}dh_1 \int_{\R^d}dh_2 \int_{\R^d}dh'_1 \int_{\R^d}dh'_2\notag\\
 &\qquad\times a(h_1)a(h_2)a(h_1')a(h_2')b(x_2-x_1) b(x_3-x_2) e^{\varphi(x_1)+\varphi(x_3)}\notag\\
 &\qquad\times (e^{\varphi(x_1+(h_1/\eps))+\varphi(x_2+(h_2/\eps))}-1)\big[
 e^{\varphi(x_2+(h_1'/\eps))}(e^{\varphi(x_3+(h_2'/\eps))}-1)+( e^{\varphi(x_2+(h_1'/\eps))}-1)\big]
 \notag\\
 &\quad=\int_\Gamma\pi_z(d\gamma)e^{\la 2\varphi,\gamma\ra}
  \int_{\R^d}z\,dx_1
 \int_{\R^d}z\,dx_2 \int_{\R^d}z\,dx_3\int_{\R^d}dh_1 \int_{\R^d}dh_2 \int_{\R^d}dh'_1 \int_{\R^d}dh'_2\notag\\
 &\qquad\times a(h_1)a(h_2)a(h_1')a(h_2')b(x_2-x_1) b(x_3-x_2) \notag\\
 &\qquad\times\big[e^{\varphi(x_1)+\varphi(x_3-(h_2'/\eps))}
 (e^{\varphi(x_1+(h_1/\eps))+\varphi(x_2+(h_2/\eps))}-1)
 e^{\varphi(x_2+(h_1'/\eps))}(e^{\varphi(x_3)}-1)\notag\\
 &\qquad\text{}
 +e^{\varphi(x_1)+\varphi(x_3)}
 (e^{\varphi(x_1+(h_1/\eps))+\varphi(x_2+(h_2/\eps)-(h_1'/\eps))}-1)
(e^{\varphi(x_2)}-1)\big]\to0,\label{dsreyu}\\
& \frac12\int_\Gamma\pi_z(d\gamma)e^{\la 2\varphi,\gamma\ra}
  \int_{\R^d}z\,dx_1
 \int_{\R^d}z\,dx_2 \int_{\R^d}dh_1 \int_{\R^d}dh_2 \int_{\R^d}dh'_1 \int_{\R^d}dh'_2\notag\\
 &\qquad\times a(h_1)a(h_2)a(h_1')a(h_2')b(x_2-x_1)^2 e^{2\varphi(x_1)+2\varphi(x_2)-2\varphi(x_1)-2\varphi(x_2)}\notag\\
 &\qquad\times
 (e^{\varphi(x_1+(h_1/\eps))+\varphi(x_2+(h_2/\eps))}-1)
 (e^{\varphi(x_1+(h'_1/\eps))+\varphi(x_2+(h'_2/\eps))}-1)\to 0.\label{vcxd}
 \end{align}
By \eqref{fdsop}--\eqref{vcxd}, formula  \eqref{jfdrstty}  for $i=3$ follows.

Now, we show \eqref{jfdrstty}  for $i=4$. Similarly to the above, we get:
\begin{align}
&\frac14\int_\Gamma\pi_z(d\gamma)e^{\la 2\varphi,\gamma\ra}\left(\int_{\R^d}z\,dx_1
 \int_{\R^d}z\,dx_2 \int_{\R^d}dh_1 \int_{\R^d}dh_2\, a(h_1)a(h_2)\right.\notag\\
 &\qquad\left. \vphantom{\int_\Gamma \int_{\R^d}}
 \times b(x_2-x_1+(h_2-h_1)/\eps)) e^{\varphi(x_1)+\varphi(x_2)}
 (e^{\varphi(x_1+(h_1/\eps))+\varphi(x_2+(h_2/\eps))}-1)\right)^2\notag\\
&\quad=\frac14\int_\Gamma\pi_z(d\gamma)e^{\la 2\varphi,\gamma\ra}\left(\int_{\R^d}z\,dx_1
 \int_{\R^d}z\,dx_2 \int_{\R^d}dh_1 \int_{\R^d}dh_2\, a(h_1)a(h_2)\right.\notag\\
 &\qquad\left. \vphantom{\int_\Gamma \int_{\R^d}}
 \times  b(x_2-x_1) e^{\varphi(x_1-(h_1/\eps))+\varphi(x_2-(h_2/\eps))}
  (e^{\varphi(x_1)+\varphi(x_2))}-1)\right)^2\notag\\
 &\quad\to \frac14\int_\Gamma\pi_z(d\gamma)e^{\la 2\varphi,\gamma\ra}\left(\int_{\R^d}z\,dx_1
 \int_{\R^d}z\,dx_2 \int_{\R^d}dh_1 \int_{\R^d}dh_2\, a(h_1)a(h_2)\right.\notag\\
 &\qquad\left. \vphantom{\int_\Gamma \int_{\R^d}}
 \times
 b(x_2-x_1) (e^{\varphi(x_1)+\varphi(x_2))}-1)\right)^2\notag\\
&\quad=\frac14\int_\Gamma\pi_z(d\gamma)e^{\la 2\varphi,\gamma\ra} \la a\ra ^4\left(
\int_{\R^d}z\,dx_1
 \int_{\R^d}z\,dx_2 b(x_2-x_1) (e^{\varphi(x_1)+\varphi(x_2))}-1)
\right)^2,\label{asubhyf}\\
 & \int_\Gamma\pi_z(d\gamma)e^{\la 2\varphi,\gamma\ra}
  \int_{\R^d}z\,dx_1
 \int_{\R^d}z\,dx_2 \int_{\R^d}z\,dx_3\int_{\R^d}dh_1 \int_{\R^d}dh_2 \int_{\R^d}dh'_1 \int_{\R^d}dh'_2\notag\\
 &\qquad\times a(h_1)a(h_2)a(h_1')a(h_2') b(x_2+(h_2/\eps)-x_1-(h_1/\eps))b(x_3+(h'_2/\eps)-x_2-(h'_1/\eps))\notag\\
 &\qquad \times e^{\varphi(x_1)+\varphi(x_3)} (e^{\varphi(x_1+(h_1/\eps))+\varphi(x_2+(h_2/\eps))}-1)
 (e^{\varphi(x_2+(h'_1/\eps))+\varphi(x_3+(h'_2/\eps))}-1)\to 0,\label{dstyugf}\\
 & \frac12\int_\Gamma\pi_z(d\gamma)e^{\la 2\varphi,\gamma\ra}
  \int_{\R^d}z\,dx_1
 \int_{\R^d}z\,dx_2 \int_{\R^d}dh_1 \int_{\R^d}dh_2 \int_{\R^d}dh'_1 \int_{\R^d}dh'_2\notag\\
 &\qquad\times a(h_1)a(h_2)a(h_1')a(h_2')b(x_2+(h_2/\eps)-x_1-(h_1/\eps))b(x_2-x_1+(h'_2-h'_1)/\eps))\notag\\
 &\qquad\times (e^{\varphi(x_1+(h_1/\eps))+\varphi(x_2+(h_2/\eps))}-1)
 (e^{\varphi(x_1+(h'_1/\eps))+\varphi(x_2+(h'_2/\eps))}-1)\to0. \label{jxsretf}
 \end{align}
By \eqref{asubhyf}--\eqref{jxsretf}, formula  \eqref{jfdrstty}  for $i=4$ follows. Thus, \eqref{jfdrstty} is proven. Formula \eqref{ds5u} follows analogously.
\end{proof}

Denote by $\mathscr F_{\mathrm{exp}}$ the linear span of $\{e^{\la\varphi,\cdot\ra}, \ \varphi\in C_0(\mathbb R^d)\}$. Consider a linear operator $(L_0,\mathscr F_{\mathrm{exp}})$ where, for each $F\in \mathscr F_{\mathrm{exp}}$, $L_0F$ is given by \eqref{dreast}. Analogously to Theorem~\ref{e6ue6}, Proposition \ref{dersres}, and Theorem~\ref{huigfmkih}, we get

\begin{proposition}\label{dxsrtsd} \rom{i)} Let the conditions of Proposition \rom{\ref{dersres}} be satisfied.
Define a quadratic form
$$ \mathcal E_0(F,G):=\int_\Gamma (-L_0F)(\gamma)G(\gamma)\pi_z(d\gamma),\quad F,G\in \mathscr F_{\mathrm{exp}}.$$
Then, for all $F,G\in\mathscr F_{\mathrm{exp}}$,
 \begin{align}&\mathcal E_0(F,G)=\la a\ra^2\int_\Gamma \pi_z(d\gamma)\left[
 \sum_{\{x_1,\,x_2\}\subset\gamma}b(x_2-x_1)\notag\right.\\
 &\qquad \times\big(F(\gamma\setminus\{x_1,x_2\})-F(\gamma)\big)
\big (G(\gamma\setminus\{x_1,x_2\})-G(\gamma)\big)\notag\\
 &\qquad \left. \vphantom{\pi_z(d\gamma)
 \sum_{\{x_1,\,x_2\}\subset\gamma}}\quad+z\la b\ra\sum_{x\in\gamma}\big(F(\gamma\setminus\{x\})-F(\gamma)\big)
 \big(G(\gamma\setminus\{x\})-G(\gamma)\big)\right].\label{dtsjkgy}
  \end{align}
Hence, the quadratic form $(\mathcal E_0,\mathscr F_{\mathrm{exp}})$ is symmetric and closable in $L^{2}(\Gamma,
\pi_z)$, and its closure will be denoted by $(\mathcal{E}_0,
D(\mathcal{E}_0))$.
Further there exists a conservative Hunt process
\begin{align*}
M_0=\left(\Omega^0,
\mathcal{F}^0,(\mathcal{F}^0_{t})_{t\geq
0},(\Theta^0_{t})_{t\geq 0}, (X^0(t))_{t\geq 0},
(P^0_{\gamma})_{\gamma\in \Gamma}\right)
\end{align*}
on $\Gamma$ which is properly associated with
$(\mathcal{E}_0, D(\mathcal{E}_0))$.

\rom{ii)} The operator $(-L_0,\mathscr F_{\mathrm{exp}})$  is essentially selfadjoint in $L^2(\Gamma,\pi_z)$.

\end{proposition}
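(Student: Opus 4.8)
The plan is to follow the pattern of Theorems~\ref{e6ue6} and~\ref{huigfmkih}, the only genuinely new ingredient being the symmetric representation \eqref{dtsjkgy} of the form.

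For part~\rom{i)}, I would first record that $L_0F\in L^2(\Gamma,\pi_z)$ for $F=e^{\la\varphi,\cdot\ra}$, $\varphi\in C_0(\R^d)$, hence by linearity for every $F\in\mathscr F_{\mathrm{exp}}$; this follows from Mecke-type estimates of precisely the kind used throughout the proof of Theorem~\ref{ds5tey} (using $b\in L^1(\R^d)\cap L^\infty(\R^d)$ and the compact support of $\varphi$), together with \eqref{ghdstre}. To obtain \eqref{dtsjkgy} I would apply the Mecke identity \eqref{Mecke} to $\int_\Gamma(-L_0F)(\gamma)G(\gamma)\,\pi_z(d\gamma)$ term by term, as in Lemma~\ref{vtrsdty}: applying \eqref{Mecke} twice to the binary death term $\sum_{\{x_1,x_2\}\subset\gamma}b(x_2-x_1)(F(\gamma\setminus\{x_1,x_2\})-F(\gamma))G(\gamma)$ and once to the single death term, and comparing with the binary and single birth terms in \eqref{dreast}, one sees that the two birth terms are exactly the ``adjoints'' of the two death terms; after reassembling the first differences one arrives at the right-hand side of \eqref{dtsjkgy}. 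Since the latter is a manifestly nonnegative symmetric bilinear form, $(-L_0,\mathscr F_{\mathrm{exp}})$ is a nonnegative symmetric operator, and hence the form $(\mathcal E_0,\mathscr F_{\mathrm{exp}})$ is closable. Its closure is a pure-jump Dirichlet form built from the difference operators $F(\gamma\setminus A)-F(\gamma)$, so the Markov property (on the closure) and the quasi-regularity of $(\mathcal E_0,D(\mathcal E_0))$ are established exactly as for $(\mathcal E,D(\mathcal E))$ in Theorem~\ref{e6ue6} (cf.\ the proof of Theorem~3.1 in \cite{KLR}), and the existence of the properly associated conservative Hunt process then follows from \cite{MR}.

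For part~\rom{ii)}, I would mirror the proof of Theorem~\ref{huigfmkih}. Write $(-\overline{L_0},D(\overline{L_0}))$ for the closure of $(-L_0,\mathscr F_{\mathrm{exp}})$, which by part~\rom{i)} is symmetric; the claim is that it is selfadjoint. Replacing $\varphi$ by $t\varphi$ in \eqref{dreast} and in the explicit formulas for $L_0e^{\la\varphi,\cdot\ra}$ obtained in the proof of Theorem~\ref{ds5tey}, one checks that both $t\mapsto e^{t\la\varphi,\cdot\ra}$ and $t\mapsto L_0e^{t\la\varphi,\cdot\ra}$ are real-analytic maps of $t\in\R$ into $L^2(\Gamma,\pi_z)$ (each is a finite combination of functions whose $L^2$-norms are controlled, locally uniformly in $t$, by Mecke estimates). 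Hence $t\mapsto e^{t\la\varphi,\cdot\ra}$ is analytic in the graph norm of $\overline{L_0}$, so all its Taylor coefficients lie in $D(\overline{L_0})$; that is, $\la\varphi,\cdot\ra^k\in D(\overline{L_0})$ for all $\varphi\in C_0(\R^d)$, $k\in\N$, and then, by polarization exactly as for \eqref{dsrer}, the whole space $\mathscr P$ of polynomials on $\Gamma$ lies in $D(\overline{L_0})$. Next I would transport the problem to the symmetric Fock space $\mathcal F(L^2(\R^d,z\,dx))$ via the unitary $I$ of Theorem~\ref{huigfmkih}; realizing the form $\mathcal E_0$ of \eqref{dtsjkgy} there by means of Mecke and the annihilation operators $\di_x$ (so that $F(\gamma\cup\{x\})-F(\gamma)=\di_x F(\gamma)$), one reads off that $-I L_0 I^{-1}$ equals, up to Wick ordering,
\[
\la a\ra^2\Bigl[\tfrac12\int_{\R^d}z\,dx_1\int_{\R^d}z\,dx_2\,b(x_2-x_1)\,(\di^\dag_{x_1}+\di^\dag_{x_2}+\di^\dag_{x_1}\di^\dag_{x_2})(\di_{x_1}+\di_{x_2}+\di_{x_1}\di_{x_2})+z\la b\ra\int_{\R^d}z\,dx\,\di^\dag_x\di_x\Bigr],
\]
so that, with respect to $\mathcal F=\bigoplus_n\mathcal F^{(n)}$, $-I L_0 I^{-1}$ is block-tridiagonal, its single-death/birth part being ($\la a\ra^2 z\la b\ra$ times) the number operator and its binary part adding and removing particles one at a time.

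Using $b\in L^1(\R^d)\cap L^\infty(\R^d)$ one obtains, exactly as in \eqref{hjcxhs}--\eqref{hgydsers}, norm bounds of order $n(n-1)$ for the particle-number-preserving blocks and $n\sqrt{n+1}$ for the $\mathcal F^{(n)}\to\mathcal F^{(n\pm1)}$ blocks. Since $\mathscr P\subset D(\overline{L_0})$ and, by these bounds, $\widetilde{\mathscr P}$ is dense in $\mathcal F_{\mathrm{fin}}(L^2(\R^d,z\,dx))$ in the graph norm of the above Wick-ordered operator, one gets $I^{-1}\mathcal F_{\mathrm{fin}}(L^2(\R^d,z\,dx))\subset D(\overline{L_0})$; iterating the block bounds and using $(2k)!\sim(2k/e)^{2k}$ to absorb the resulting $(n+k)^{2k}$-type growth then yields, for every $F\in I^{-1}\mathcal F_{\mathrm{fin}}(L^2(\R^d,z\,dx))$, a bound of the form \eqref{kjdsrt}, i.e.\ $\sum_{k\ge1}\frac{t^k}{(2k)!}\|(-\overline{L_0})^kF\|_{L^2(\Gamma,\pi_z)}<\infty$ for some $t>0$. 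By the Nussbaum theorem (\cite[Theorem~X.40]{RS}), $(-\overline{L_0},D(\overline{L_0}))$ is essentially selfadjoint on $I^{-1}\mathcal F_{\mathrm{fin}}(L^2(\R^d,z\,dx))$; as $\overline{L_0}$ is symmetric and extends this essentially selfadjoint operator, it is itself selfadjoint, which is assertion~\rom{ii)}. I expect the main obstacle to be the Fock-space realization of the birth-and-death generator together with these block norm estimates: unlike the single-jump terms treated in Theorem~\ref{huigfmkih}, one must here control the products of two annihilation (resp.\ creation) operators integrated against $b(x_2-x_1)$ coming from the binary death and binary birth terms, and verify (using $b\in L^1\cap L^\infty$) that the resulting contributions grow only quadratically in the particle number; this is routine but somewhat delicate, and entirely parallel to the computations \eqref{hjcxhs}--\eqref{hgydsers} already performed in the excerpt.
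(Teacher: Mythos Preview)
Your proposal is correct and is exactly the approach the paper intends: the paper's entire proof of this proposition is the single sentence ``Analogously to Theorem~\ref{e6ue6}, Proposition~\ref{dersres}, and Theorem~\ref{huigfmkih}, we get'', and what you have written is a faithful and accurate unpacking of that analogy. The only deviation worth noting is your route to $\mathscr P\subset D(\overline{L_0})$ via analyticity of $t\mapsto e^{t\la\varphi,\cdot\ra}$ in the graph norm (natural here since the initial domain is $\mathscr F_{\mathrm{exp}}$ rather than $\FC$), whereas in Theorem~\ref{huigfmkih} one argues directly that $\la f,\cdot\ra^n\in D(\overline L)$; both arguments lead to the same conclusion and feed into the Nussbaum criterion in the same way.
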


Denote by
$$
M_\eps=\left(\Omega^\eps,
\mathcal{F}^\eps,(\mathcal{F}^\eps_{t})_{t\geq
0},(\Theta^\eps_{t})_{t\geq 0}, (X^\eps(t))_{t\geq 0},
(P^\eps_{\gamma})_{\gamma\in \Gamma}\right)
$$
the Markov processes from Theorem~\ref{e6ue6} which corresponds to the $L_\eps$ generator.
By the theory of Dirichlet forms \cite{MR}, the Markov processes $M_\eps$, as well as the Markov process $M_0$ from Proposition~\ref{dxsrtsd} can be chosen in the canonical form, i.e., for each $\eps\ge0$, $\Omega^\eps$ is the set
$D(\left[0,+\infty\right),\Gamma)$ of all {\it c\'adl\'ag\/} functions
$\omega:\left[0,+\infty\right)\to\Gamma$ (i.e., $\omega$
is right continuous on $\left[0,+\infty\right)$ and has
left limits on $(0,+\infty)$),
$X^\eps(t)(\omega)=\omega(t)$, $t\geq0$,
$\omega\in\Omega^\eps$, $(\mathcal F^\eps_t)_{t\geq0}$
together with $\mathcal F^\eps$ is the corresponding minimum
completed admissible family (cf.\ \cite[Section 4.1]{Fu80}) and $\Theta^\eps_t$, $t\geq0$, are the corresponding
natural time shifts. So, for each $\eps\ge0$, we choose the canonical version of the $M^\eps$ process and define a stochastic process $Y_\eps=(Y_\eps(t))_{t\ge0}$ whose law is the probability measure on $D(\left[0,+\infty\right),\Gamma)$ given by $Q_\eps:=\int_\Gamma\pi_z(d\gamma) P_\gamma^\eps$. Note that $\pi_z$ is an invariant measure for $Y_\eps$.

\begin{corollary} Let the conditions of Theorem \rom{\ref{ds5tey}} be satisfied. Then the finite-dim\-ens\-ional distributions of the process $Y_\eps$ weakly converge to the finite-dimensional distributions of $Y_0$ as $\eps\to0$.
\end{corollary}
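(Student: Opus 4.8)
The plan is to deduce the corollary from Theorem~\ref{ds5tey} by the standard route: convergence of generators in $L^2(\Gamma,\pi_z)$ $\Rightarrow$ convergence of the associated $L^2$-semigroups $\Rightarrow$ convergence of finite-dimensional distributions, the last implication being available precisely because $\pi_z$ is a common invariant measure and is used as the initial law of all the $Y_\eps$. First I would record the functional-analytic set-up. For each $\eps>0$ the scaled $q_\eps$ is of the type covered by Proposition~\ref{dersres} (with $a$ replaced by $\eps^{d}a(\eps\,\cdot)$), so by Theorem~\ref{huigfmkih} the operator $(-L_\eps,D(L_\eps))$ is nonnegative and selfadjoint and, by the remark after \eqref{fdse5weyre}, $\mathscr F_{\mathrm{exp}}\subset D(L_\eps)$; moreover $p^\eps_t=\exp(tL_\eps)=\exp\!\big(-t(-L_\eps)\big)$. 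By Proposition~\ref{dxsrtsd}\,ii) the operator $(-L_0,D(L_0))$ is likewise nonnegative and selfadjoint, and $\mathscr F_{\mathrm{exp}}$ is a \emph{core} for it. Theorem~\ref{ds5tey} says exactly that $-L_\eps F\to -L_0 F$ in $L^2(\Gamma,\pi_z)$ as $\eps\to0$ for every $F\in\mathscr F_{\mathrm{exp}}$. I would then invoke the Trotter--Kato-type theorem on selfadjoint operators converging on a common core (e.g.\ \cite{RS}) to conclude that $-L_\eps\to -L_0$ in the strong resolvent sense; since all these operators are nonnegative, applying this to the bounded continuous function $s\mapsto e^{-ts}$ ($s\ge0$) yields $p^\eps_t\to p^0_t$ strongly in $L^2(\Gamma,\pi_z)$ for every $t\ge0$.

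Next I would translate this into convergence of mixed moments. Fix $n\in\N$, times $0\le t_1<\dots<t_n$, and bounded continuous functions $F_1,\dots,F_n$ on $\Gamma$, and let $M_{F_j}$ denote multiplication by $F_j$ in $L^2(\Gamma,\pi_z)$, a bounded operator with $\|M_{F_j}\|\le\|F_j\|_\infty$. Since $M_\eps$ ($\eps\ge0$) is a Markov (Hunt) process properly associated with its Dirichlet form and $Y_\eps$ starts from the invariant law $\pi_z$, the Markov property together with the identification of $p^\eps_t$ with the transition function (see \cite{Fu80,MR}) gives
\[
\mathbb E_{Q_\eps}\Big[\prod_{j=1}^{n}F_j\big(Y_\eps(t_j)\big)\Big]=\big(\mathbf 1,\;p^\eps_{t_1}M_{F_1}p^\eps_{t_2-t_1}M_{F_2}\cdots p^\eps_{t_n-t_{n-1}}M_{F_n}\mathbf 1\big)_{L^2(\Gamma,\pi_z)},
\]
where $\mathbf 1\in L^2(\Gamma,\pi_z)$ is the constant function (recall $\pi_z(\Gamma)=1$). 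Since each $p^\eps_t$ is a contraction with $p^\eps_t\to p^0_t$ strongly while the $M_{F_j}$ are fixed bounded operators, the whole operator product converges strongly (strong operator convergence is preserved under composition with uniformly bounded right factors), and hence the right-hand side converges to its $\eps=0$ analogue, i.e.\ to $\mathbb E_{Q_0}\big[\prod_{j=1}^n F_j(Y_0(t_j))\big]$.

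Finally I would upgrade this to genuine weak convergence of the law of $\big(Y_\eps(t_1),\dots,Y_\eps(t_n)\big)$ on $\Gamma^{n}$. Because each $Y_\eps(t)$ has law $\pi_z$, every coordinate marginal of this law is the fixed Radon (hence tight) probability measure $\pi_z$ on the Polish space $\Gamma$, so the family of these laws is uniformly tight on $\Gamma^n$. By the previous step, every weak subsequential limit $\nu$ satisfies $\int_{\Gamma^n}\prod_{j=1}^n F_j(\gamma_j)\,\nu(d\gamma_1\dotsm d\gamma_n)=\mathbb E_{Q_0}\big[\prod_{j=1}^n F_j(Y_0(t_j))\big]$ for all bounded continuous $F_j$; since a probability measure on $\Gamma^n$ is determined by the integrals of such product functions, $\nu$ must coincide with the law of $\big(Y_0(t_1),\dots,Y_0(t_n)\big)$. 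Hence the full family converges weakly, which is the assertion of the corollary.

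I expect the main difficulty to be conceptual rather than computational: the substantive analytic input is Theorem~\ref{ds5tey} itself, and within the present argument the point that requires care is the verification of the hypotheses of the Trotter--Kato theorem --- in particular that $\mathscr F_{\mathrm{exp}}$ is a core for the \emph{limiting} operator $-L_0$ (not merely contained in its domain), which is exactly what Proposition~\ref{dxsrtsd}\,ii) supplies. Once strong resolvent, and hence strong semigroup, convergence is in hand, the passage to convergence of finite-dimensional distributions is routine, precisely because $\pi_z$ is the common invariant and initial measure, so all one-dimensional marginals coincide and tightness on $\Gamma^n$ is automatic.
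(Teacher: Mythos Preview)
Your proposal is correct and follows essentially the same route as the paper's proof: both combine Theorem~\ref{ds5tey} with the essential self-adjointness from Proposition~\ref{dxsrtsd}\,ii) and a Trotter--Kato/Davies-type criterion on the core $\mathscr F_{\mathrm{exp}}$ to obtain strong semigroup convergence, and then use the fact that all one-dimensional marginals equal $\pi_z$ (hence tightness on the Polish space $\Gamma^n$) to pass to weak convergence of the finite-dimensional laws. Your write-up is more explicit in the semigroup-to-FDD step (the mixed-moment formula and the identification of subsequential limits), but the underlying argument is the same.
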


\begin{proof}
The statement follows analogously to \cite[Theorem~5.1]{FKL}, however, since the argumentation is rather short, we present it. By Theorem~\ref{ds5tey}, Proposition~\ref{dxsrtsd}, ii) and \cite[Chapter~3, Theorem~3.17]{Dav},  we have, for each $t\ge0$,
$e^{tL_\varepsilon}\to e^{tL_0}$ strongly in $L^2(\Gamma,\pi_z)$
as $\varepsilon\to0$. We now fix any $0\le t_1<t_2<\dots<t_n$, $n\in\mathbb N$. For $\eps\ge0$, denote by $\mu_{t_1,\dots,t_n}^\eps$ the
finite-dimensional  distribution of the process  $Y_\eps$ at times $t_1,\dots,t_n$, which is a probability measure on $\Gamma^n$.
Since $\Gamma$ is a Polish space, by \cite[Chapter II, Theorem~3.2]{Par}, the  measure $\pi_z$ is tight on $\Gamma$. Since all the marginal distributions of the measure $\mu_{t_1,\dots,t_n}^\eps$ are $\pi_z$, we therefore conclude that the set $\{\mu_{t_1,\dots,t_n}^\eps\mid\eps>0\}$ is pre-compact in the space $\mathcal M(\Gamma^n)$ of the probability measures on $\Gamma^n$ with respect to the weak topology, see e.g.\ \cite[Chapter II, Section~6]{Par}. Hence, the weak convergence of finite-dimensional distributions follows from the strong convergence of the semigroups.
\end{proof}

\begin{remark}
The dynamics as in Proposition \ref{cdsesers}, ii)  can be scaled as follows:
$$ q_\eps(x,h):=\eps^d a(\eps h)b(x-h).$$
By analogy, one can show that the corresponding dynamics converge, as $\eps\to0$, to a birth-and-death process in continuum with generator
$$ (L_0F)(\gamma)=\la a\ra\la b\ra \left(\sum_{x\in\gamma}\big(F(\gamma\setminus\{x\})-F(\gamma)\big)+\int_{\R^d}z\,dx\big(
F(\gamma\cup\{x\})-F(\gamma)\big)\right).$$
The operator $L_0$, realized in the Fock space $\mathcal F(L^2(\R^d,z\,dx))$, is the differential second quantization of the operator $\la a\ra\la b\ra \mathbf 1$, so the corresponding dynamics is `free', i.e., without interaction between particles, see \cite{KLR_free,Surgailis,Surgailis2} for further detail.
\end{remark}

\begin{corollary}\label{uiyre6} The quadratic form $(\mathcal E_0,D(\mathcal E_0))$ from Proposition \ref{dxsrtsd}, i) satisfies
the Poincar\'e inequality:
\begin{equation}\label{uyfde}\mathcal E_0(F,F)\ge\la a\ra^2z\la b\ra \int_\Gamma (F(\gamma)-\la F\ra_{\pi_z})^2\pi_z(d\gamma),\quad F\in D(\mathcal E_0),\end{equation}
where $\la F\ra_{\pi_z}:=\int_\Gamma F(\gamma)\pi_z(d\gamma)$.
\end{corollary}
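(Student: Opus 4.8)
The plan is to discard the nonnegative two-point part of $\mathcal E_0$ and to recognize the surviving one-point part as the Dirichlet form of the free Glauber dynamics on Poisson space, whose spectral gap equals $1$.

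First I would take $F\in\mathscr F_{\mathrm{exp}}$ and put $G=F$ in \eqref{dtsjkgy}. Since $\la a\ra^2\ge0$ and $b\ge0$, the summand $b(x_2-x_1)\big(F(\gamma\setminus\{x_1,x_2\})-F(\gamma)\big)^2$ is nonnegative for $\pi_z$-a.a.\ $\gamma$, so it may simply be dropped, leaving
$$\mathcal E_0(F,F)\ge\la a\ra^2 z\la b\ra\int_\Gamma\pi_z(d\gamma)\sum_{x\in\gamma}\big(F(\gamma\setminus\{x\})-F(\gamma)\big)^2.$$
Next I would apply the Mecke identity \eqref{Mecke} to rewrite the last integral as $\int_\Gamma\pi_z(d\gamma)\int_{\R^d}z\,dx\,\big(F(\gamma\cup\{x\})-F(\gamma)\big)^2=\int_\Gamma\pi_z(d\gamma)\int_{\R^d}z\,dx\,(\di_xF(\gamma))^2$, with $\di_x$ the annihilation operator from the proof of Theorem~\ref{huigfmkih}. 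Under the unitary $I$ this becomes the quadratic form of the number operator $N=\int_{\R^d}z\,dx\,\di_x^\dag\di_x$, which acts on $\mathcal F^{(n)}(L^2(\R^d,z\,dx))$ as multiplication by $n$; since $\la F\ra_{\pi_z}$ is exactly the $\mathcal F^{(0)}$-component of $If$ and $n\ge1$ on all higher chaoses, this gives
$$\int_\Gamma\pi_z(d\gamma)\int_{\R^d}z\,dx\,(\di_xF(\gamma))^2\ge\int_\Gamma\big(F(\gamma)-\la F\ra_{\pi_z}\big)^2\pi_z(d\gamma),$$
i.e.\ \eqref{uyfde} on $\mathscr F_{\mathrm{exp}}$. (Alternatively one can simply quote the classical Poincar\'e inequality for Poisson random measures.)

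Finally I would pass from $\mathscr F_{\mathrm{exp}}$ to $D(\mathcal E_0)$: given $F\in D(\mathcal E_0)$, pick $F_n\in\mathscr F_{\mathrm{exp}}$ with $F_n\to F$ in $L^2(\Gamma,\pi_z)$ and $\mathcal E_0(F_n-F,F_n-F)\to0$, so that $\mathcal E_0(F_n,F_n)\to\mathcal E_0(F,F)$, $\la F_n\ra_{\pi_z}\to\la F\ra_{\pi_z}$, and $\int_\Gamma(F_n-\la F_n\ra_{\pi_z})^2\,d\pi_z\to\int_\Gamma(F-\la F\ra_{\pi_z})^2\,d\pi_z$; passing to the limit in \eqref{uyfde} then finishes the proof. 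I do not expect a serious obstacle here: the only points requiring a little care are the identification of the one-point form with the number-operator form (equivalently, the appeal to the standard Poisson Poincar\'e inequality) and the routine closure argument in the last step.
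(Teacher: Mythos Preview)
Your proposal is correct and follows essentially the same approach as the paper: drop the nonnegative two-point part of \eqref{dtsjkgy}, identify the remaining one-point form with the Dirichlet form of the number operator (the free Glauber form on Poisson space), invoke its spectral gap $1$, and extend by density. The only cosmetic differences are that the paper works on the core $\mathscr P$ of polynomials rather than $\mathscr F_{\mathrm{exp}}$, and it asserts the Fock-space representation of the one-point form directly rather than passing through the Mecke identity as you do; both choices are equally valid.
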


\begin{remark} The Poincar\'e inequality means that the operator $(-L_0,D(L_0))$ has a spectral gap, the set $\big(0,\la a\ra^2z\la b\ra \big)$, and that the kernel of $(-L_0,D(L_0))$ consists only of the constants.

\end{remark}

\begin{proof}
Recall the set $\mathscr P$ from the proof of Theorem \ref{huigfmkih}. Clearly, $\mathscr P$ is a core for the quadratic form $(\mathcal E_0,D(\mathcal E_0))$, so it suffices to prove \eqref{uyfde} only for any $F\in\mathscr P$. By \eqref{dtsjkgy},
\begin{equation}\label{hgtyd}
\mathcal E_0(F,F)\ge \la a\ra^2z\la b\ra\, \mathcal E_0'(F,F),\quad F\in\mathscr P,\end{equation}
where
$$
\mathcal E_0'(F,G):=\int_\Gamma\big( F(\gamma\setminus\{x\})-F(\gamma)\big) \big( G(\gamma\setminus\{x\})-G(\gamma)\big) \pi_z(d\gamma),\quad F,G\in\mathscr P.
$$
The generator of the quadratic form $(\mathcal E_0',D(\mathcal E_0'))$, realized in the Fock space $\mathcal F(L^2(\R^d,z\,dx))$, has a representation $\int_{\R^d}z\,dx\,\di^\dag_x\di_x$, i.e., it is the differential second quantization of the identity operator $\mathbf 1$, i.e., for any $f^{(n)}\in\mathcal F^{(n)}(L^2(\R^d.z\,dx))$,
$$ \bigg(\int_{\R^d}z\,dx\,\di^\dag_x\di_xf^{(n)}\bigg)(y_1,\dots,y_n)=nf^{(n)}(y_1,\dots,y_n).$$
 Hence,
\begin{equation}\label{ldrtuygty}
\mathcal E_0'(F,F)\ge \int_\Gamma (F(\gamma)-\la F\ra_{\pi_z})^2\pi_z(d\gamma),\quad F\in\mathscr P.
\end{equation}
The statement now follows from \eqref{hgtyd} and \eqref{ldrtuygty}.
\end{proof}

\begin{remark} We note that the initial dynamics of binary jumps is translation invariant and conservative. So it is hopeless to expect that its generator has a spectral gap. So the spectral gap of the generator $L_0$ appears as a result of the scaling limit.

Note also that the generator of the dynamics of binary jumps is independent
of the intensity parameter  $z>0$. Hence, at least heuristically,
the initial  dynamics has a continuum of symmetrizing Poisson  measures, indexed by the intensity $z>0$.
On the other hand, the limiting birth-and-death dynamics has only one of these measures as the symmetrizing one.  Thus, the result of the scaling  essentially depends on the initial distribution of the dynamics.
\end{remark}

 \begin{center}
{\bf Acknowledgements}\end{center}

 The authors acknowledge the financial support of the SFB 701 ``Spectral
structures and topological methods in mathematics'', Bielefeld University.

\end{document}